\documentclass[11pt]{article}

\usepackage[margin=1in]{geometry}
\usepackage{amsmath,amssymb,amsthm}
\usepackage{mathtools}
\usepackage{tikz}
\usepackage{hyperref}

\newtheorem{theorem}{Theorem}[section]
\newtheorem{lemma}[theorem]{Lemma}
\newtheorem{corollary}[theorem]{Corollary}

\DeclareMathOperator{\diam}{diam}
\DeclareMathOperator{\diag}{diag}
\DeclareMathOperator{\Tr}{Tr}
\DeclareMathOperator{\tr}{tr}

\newcommand{\DL}{D^L}

\title{On a distance Laplacian analog of Brouwer's conjecture \\ for several classes of graphs}
\author{
Silin Huang \\
\small College of Information Science and Technology, Jinan University, Guangzhou, China \\
\small\texttt{huangsilin@stu.jnu.edu.cn}
}
\date{}

\begin{document}
\maketitle

\begin{abstract}
Zhou~et~al.~(2025) proposed a distance Laplacian analog
of Brouwer's conjecture on partial sums of Laplacian eigenvalues,
asserting that for any connected graph $G$,
$\sum_{i=1}^r \partial_i^L(G)\le W(G)+\binom{r+2}{3},$
where $\partial_i^L(G)$ are the eigenvalues
of the distance Laplacian matrix
and $W(G)$ is the Wiener index.
We prove this inequality for three broad classes of graphs,
thereby improving and extending existing results.
First, we prove that all connected graphs
of diameter at most $D$ satisfy the inequality
once the order $n$ satisfies $n\ge\lceil\frac49(D+1)^3\rceil$.
Second, we show that the inequality holds
for every diameter-$2$ graph
with the only exceptions being $K_{1,3}$ at $r=2$
and $K_{1,4}$ at $r=3$.
Third, we prove that if the maximum degree is $\Delta(G)=n-k$,
then the inequality holds for all $n\ge N(k)$,
where $N(2)=10$ and $N(k)=\lceil 5(k-1)^{3/2}\rceil$ for $k\ge 3$.
Our proofs rely on decomposing the distance Laplacian matrix
into Laplacian matrices of auxiliary graphs
whose edges are vertex pairs at distance
at least a prescribed value,
together with classical eigenvalue inequalities.
\end{abstract}

\section{Introduction}
Brouwer's conjecture for graph Laplacians asserts that, if
$\lambda_1\ge\cdots\ge\lambda_n=0$
are the eigenvalues of the Laplacian matrix $L(G)$
of a graph $G(V,E)$ with $|E|=m$ edges, then
\begin{equation}
    S_r\coloneqq\sum_{i=1}^r\lambda_i\le m+\binom{r+1}{2}.
\end{equation}
This conjecture was first introduced by
Brouwer and Haemers
in their monograph~\cite[p.~60]{Brouwer},
and has generated a large body of work.
The cases $r=2$ and $r=3$ were proved by
Haemers~et~al.~\cite{Haemers}
and Wang~et~al.~\cite{Wang}, respectively.
Many further partial results and structural constraints
are also known, including work of Du and Zhou~\cite{Du},
Chen~\cite{Chen2018,Chen2019}
and Ganie~et~al.~\cite{Ganie}.

Independently, the distance Laplacian matrix
was introduced by Aouchiche and Hansen~\cite{Aouchiche2013}
as the Laplacian analog of the distance matrix:
\begin{equation}
    \DL(G)=\diag(\Tr_G(v):v\in V(G))-D(G).
\end{equation}
Distance Laplacian spectra
have since been studied from several perspectives,
including spectral properties,
eigenvalue multiplicities,
extremal eigenvalues, and cospectrality;
see, for example,
\cite{Aouchiche2014,Nath,Lin,Fernandes,Brimkov}
and the survey~\cite{Rather}.

In the setting of distance Laplacian matrices,
a natural analog of Brouwer's conjecture
is to determine an upper bound for
\begin{equation}
    U_r\coloneqq\sum_{i=1}^r \partial_i^L(G),
\end{equation}
where $\partial_1^L(G)\ge\dots\ge\partial_r^L(G)$
are the eigenvalues of $\DL(G)$.
Motivated by the identity $\tr(\DL(G))=2W(G)$,
Zhou~et~al.~\cite{Zhou} proposed the following
distance Laplacian analog of Brouwer's conjecture:
\begin{equation}\label{eq:target}
    U_r=\sum_{i=1}^r \partial_i^L(G) \le W(G)+\binom{r+2}{3},
\end{equation}
where
\begin{equation}
    W(G)=\sum_{\{u,v\}\subseteq V(G)} d_G(u,v)
\end{equation}
is the Wiener index.

Before this conjecture was formulated,
Pirzada and Khan~\cite{Pirzada} already studied
the sum of the largest distance Laplacian eigenvalues
and obtained bounds in terms of
the order, diameter, and Wiener index.
Zhou~et~al.~\cite{Zhou} then proposed this conjecture
and proved it for diameter-$1$ graphs
and for some connected diameter-$2$ graphs
with large maximum degree.

Recently, Mushtaq~et~al.~\cite{Mushtaq}
proved it for several
diameter-$3$ and diameter-$4$ graph families,
including graphs in $\Gamma_n$
(diameter-$3$ graphs
that have a spanning tree of diameter at most $3$)
and graphs with $\Delta=n-2$ for $n\ge 95$,
sun graphs $S(a,a)$ for $a\ge 12$,
and partial sun graphs $S(a,k)$ for $a\ge 450$.

This paper improves and extends some of these results.
In the remainder of this paper,
we refer to the distance Laplacian analog
of Brouwer's conjecture,~\eqref{eq:target},
as the \emph{target inequality}.

The paper is organized as follows.

In Section~\ref{sec:2},
we collect notation and the eigenvalue inequalities
used throughout the paper.

In Section~\ref{sec:3},
we prove that all connected graphs of diameter
at most $D\ge 2$ satisfy the target inequality once
\begin{equation}
  n\ge\left\lceil\frac49(D+1)^3\right\rceil.
\end{equation}

In Section~\ref{sec:4},
we solve the case of diameter-$2$ graphs completely.
We prove that apart from $K_{1,3}$ at $r=2$
and $K_{1,4}$ at $r=3$,
every connected diameter-$2$ graph
satisfies the target inequality.

In Section~\ref{sec:5},
we prove a maximum-degree theorem.  If
$\Delta(G)=n-k$, then the target inequality holds once
$n\ge N(k)$, where $N(2)=10$ and
$N(k)=\lceil 5(k-1)^{3/2}\rceil$
for $k\ge 3$.

\section{Preliminaries}\label{sec:2}
All graphs $G(V,E)$ in this paper are
finite, simple, undirected, and connected,
unless stated otherwise.
For a vertex $u\in V(G)$,
let $\deg_G(u)$ denote its degree in $G$.
For vertices $u,v\in V(G)$,
let $d_G(u,v)$ denote their distance in $G$.
Then, the \emph{distance matrix} of $G$ is $D(G)=(d_G(u,v))$,
the \emph{transmission} of $v$ is $\Tr(v)=\sum_u d_G(u,v)$,
and the \emph{distance Laplacian} matrix of $G$ is
\begin{equation}
    \DL(G)=\diag(\Tr(v):v\in V(G))-D(G).
\end{equation}
The eigenvalues of $\DL(G)$ are denoted by
\begin{equation}
    \partial^L_1(G)\ge\partial^L_2(G)\ge\cdots\ge\partial^L_n(G)=0,
\end{equation}
where the last equality follows from~\cite[Theorem~2.2]{Aouchiche2014}.
The partial sum of these eigenvalues is denoted
\begin{equation}
    U_r(G)=\sum_{i=1}^r \partial^L_i(G),
\end{equation}
while the \emph{Wiener index} of $G$ is denoted
\begin{equation}
    W(G)=\sum_{\{u,v\}\subseteq V(G)} d_G(u,v).
\end{equation}
The target inequality,
which is an analog of the inequality in Brouwer's conjecture, is
\begin{equation}
    U_r(G)\le W(G)+\binom{r+2}{3}\qquad (1\le r\le n).
\end{equation}

We shall use two standard eigenvalue inequalities.

\begin{lemma}[Ky Fan inequality]\label{lem:ky-fan}
    For Hermitian matrices $A,B$ and $1\le r\le n$,
    \begin{equation}
        \sum_{i=1}^r \lambda_i(A+B)
        \le\sum_{i=1}^r \lambda_i(A)+\sum_{i=1}^r \lambda_i(B).
    \end{equation}
\end{lemma}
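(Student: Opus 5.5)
The plan is to use the variational (Ky Fan maximum) characterization of partial eigenvalue sums and then apply it to $A+B$. For a Hermitian $n\times n$ matrix $M$ and $1\le r\le n$, I will first establish
\begin{equation}
    \sum_{i=1}^r \lambda_i(M)=\max\{\tr(X^*MX): X\in\mathbb{C}^{n\times r},\ X^*X=I_r\}.
\end{equation}
Once this identity is available, the inequality follows in one line. Choose $X$ with orthonormal columns attaining the maximum for $M=A+B$. Then
\begin{equation}
    \sum_{i=1}^r\lambda_i(A+B)=\tr(X^*AX)+\tr(X^*BX),
\end{equation}
and each trace on the right is at most the corresponding maximum for $A$ and for $B$, respectively.

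To prove the characterization, I will diagonalize $M=U\Lambda U^*$ with $\Lambda=\diag(\lambda_1,\dots,\lambda_n)$ in decreasing order. Taking $X$ to be the first $r$ columns of $U$ gives $\tr(X^*MX)=\sum_{i\le r}\lambda_i$, so the maximum is at least this value. For the reverse inequality, set $Y=U^*X$, which again has orthonormal columns. Then
\begin{equation}
    \tr(X^*MX)=\sum_{i=1}^n \lambda_i w_i,\qquad w_i=\sum_{j=1}^r |Y_{ij}|^2 .
\end{equation}
The weights satisfy $0\le w_i\le 1$, because $w_i$ is the squared norm of the projection of the unit vector $e_i$ onto the column space of $Y$. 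They also satisfy $\sum_i w_i=\tr(Y^*Y)=r$.

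The remaining step, which I expect to be the only real obstacle, is an elementary rearrangement argument. Under the constraints $0\le w_i\le1$ and $\sum w_i=r$, the linear form $\sum\lambda_i w_i$ with decreasing $\lambda_i$ is maximized at $w_1=\dots=w_r=1$ and all other $w_i=0$. To see this, write
\begin{equation}
    \sum_{i\le r}\lambda_i-\sum_i\lambda_i w_i=\sum_{i\le r}\lambda_i(1-w_i)-\sum_{i>r}\lambda_i w_i .
\end{equation}
The right-hand side is at least $\lambda_r\bigl(\sum_{i\le r}(1-w_i)-\sum_{i>r}w_i\bigr)=0$. This completes the characterization, and with it the proof of Lemma~\ref{lem:ky-fan}.
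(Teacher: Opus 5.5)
Your proof is correct and follows the same route as the paper. The paper simply cites Fan's Theorem 1, which is exactly the maximum principle $\sum_{i\le r}\lambda_i(M)=\max_{X^*X=I_r}\tr(X^*MX)$, and obtains the inequality by splitting $\tr(X^*(A+B)X)$ just as you do; you additionally give a sound self-contained proof of that principle via the weights $w_i\in[0,1]$ with $\sum_i w_i=r$.
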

\begin{proof}
    This is a direct corollary of~\cite[Theorem~1]{Fan}.
\end{proof}

\begin{lemma}\label{lem:gmb}
    Let $H$ be a graph with degree sequence
    $d_1,\ldots,d_n$, $m(H)$ edges, and
    Laplacian eigenvalues
    $\lambda_1(H)\ge\cdots\ge\lambda_n(H)=0$.  Then
    \begin{equation}
        \sum_{i=1}^r \lambda_i(H)
        \le \sum_{v\in V(H)}\min\{d_H(v),r\}
        \le m(H)+\frac{nr}{2}
    \end{equation}
    for $1\le r\le n$.
\end{lemma}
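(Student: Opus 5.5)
The plan is to prove the two inequalities separately; the second is elementary. Since $\sum_{v}d_H(v)=2m(H)$ and $\min\{d_H(v),r\}\le \tfrac12\bigl(d_H(v)+r\bigr)$ (the minimum of two numbers is at most their average), summing over $v\in V(H)$ gives
\begin{equation*}
\sum_{v\in V(H)}\min\{d_H(v),r\}\le \frac{2m(H)+nr}{2}=m(H)+\frac{nr}{2}.
\end{equation*}

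For the first inequality, I would use the known Grone--Merris--Bai argument, which rests on Ky Fan's variational principle. We have
\begin{equation*}
\sum_{i=1}^r\lambda_i(H)=\max\Bigl\{\sum_{j=1}^r x_j^{T}L(H)x_j : x_1,\dots,x_r \text{ orthonormal}\Bigr\}.
\end{equation*}
Write $L(H)=\sum_{uv\in E(H)}(e_u-e_v)(e_u-e_v)^T$ and let $P$ be the orthogonal projection onto the span of $x_1,\dots,x_r$. The quantity to bound is then
\begin{equation*}
\sum_{uv\in E(H)}\|P(e_u-e_v)\|^2,
\end{equation*}
which should be at most $\sum_v\min\{d_H(v),r\}$.

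The main obstacle is that this trace-with-projection bound is not immediate. In the literature it is Bai's proof of the Grone--Merris conjecture, equivalently the statement that the Laplacian spectrum is majorized by the conjugate degree sequence $d^*$. The cleanest route is to cite that theorem. Grone--Merris--Bai gives
\begin{equation*}
\sum_{i=1}^r\lambda_i(H)\le\sum_{i=1}^r d_i^*, \qquad d_i^*=\#\{v:d_H(v)\ge i\}.
\end{equation*}
Counting the pairs $(v,i)$ with $1\le i\le \min\{d_H(v),r\}$ in two ways gives the identity
\begin{equation*}
\sum_{i=1}^r d_i^*=\sum_{v}\min\{d_H(v),r\}.
\end{equation*}
Combining the two yields the first inequality.

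So the plan is: cite the Grone--Merris conjecture as proved by Bai, do the double-counting step, and then do the averaging step above. I would not reprove Bai's theorem, since its proof is a lengthy induction on edges with interlacing and the paper treats Lemma~\ref{lem:gmb} as standard.
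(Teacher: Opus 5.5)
Your proposal is correct and matches the paper's proof: cite Bai's proof of the Grone--Merris conjecture for the first inequality, then use $\min\{d,r\}\le(d+r)/2$ with $\sum_v d_H(v)=2m(H)$ for the second. Your explicit double-counting identity $\sum_{i=1}^r d_i^*=\sum_v\min\{d_H(v),r\}$ is a step the paper leaves implicit. The Ky Fan projection detour is not needed, since you end up citing the theorem anyway.
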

\begin{proof}
    The first inequality
    is the Grone--Merris conjecture~\cite{Grone},
    which is proved by Bai~\cite{Bai}.
    The second follows from $\min\{d,r\}\le(d+r)/2$
    and $\sum_v d_H(v)=2m(H)$.
\end{proof}

\begin{lemma}\label{lem:vertex-cover-spectral-bound}
    Let $H$ be a graph
    and let $X\subseteq V(H)$ be a vertex cover of $H$.
    Then, for $1\le r\le|V(H)|$,
    \begin{equation}
        \sum_{i=1}^r\lambda_i(L(H))\le m(H)+|X|r.
    \end{equation}
\end{lemma}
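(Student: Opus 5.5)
The plan is to apply Lemma~\ref{lem:gmb} (the Grone--Merris--Bai bound) and then bound the degree-sum term combinatorially using the cover $X$. Lemma~\ref{lem:gmb} gives
\begin{equation*}
    \sum_{i=1}^r\lambda_i(L(H))\le\sum_{v\in V(H)}\min\{d_H(v),r\}.
\end{equation*}
Write $Y=V(H)\setminus X$; since $X$ is a vertex cover, $Y$ is an independent set.

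We split the sum over $X$ and over $Y$. For $v\in X$ we use the trivial estimate $\min\{d_H(v),r\}\le r$, which contributes at most $|X|r$ in total. For $v\in Y$ we use $\min\{d_H(v),r\}\le d_H(v)$, so this part contributes at most $\sum_{v\in Y}d_H(v)$.

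It remains to show that $\sum_{v\in Y}d_H(v)\le m(H)$. Because $Y$ is independent, every edge incident to a vertex of $Y$ has exactly one endpoint in $Y$ and the other in $X$. Hence $\sum_{v\in Y}d_H(v)$ equals the number of edges between $Y$ and $X$, which is at most $m(H)$. Adding the two contributions yields the claimed bound $m(H)+|X|r$.

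There is no real obstacle here. The only point needing care is noticing that the independence of $Y$ makes $\sum_{v\in Y}d_H(v)$ count each edge at most once, so it is bounded by $m(H)$. The degenerate case $H$ edgeless is covered automatically, since then both sides are trivially compatible (the left side is $0$).
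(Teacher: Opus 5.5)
Your proposal is correct and follows essentially the same route as the paper's proof. Both apply the Grone--Merris--Bai bound, bound the contribution of $X$ by $|X|r$, and use the independence of $V(H)\setminus X$ to get $\sum_{u\notin X}d_H(u)\le m(H)$.
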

\begin{proof}
    By the first inequality in Lemma~\ref{lem:gmb},
    \begin{equation}
        \sum_{i=1}^r\lambda_i(L(H))
        \le\sum_{u\in V(H)}\min\{d_H(u),r\}.
    \end{equation}
    Since $X$ is a vertex cover,
    $V(H)\setminus X$ is independent, and hence
    \begin{equation}
        \sum_{u\notin X}d_H(u)\le m(H).
    \end{equation}
    Also
    $\sum_{u\in X}\min\{d_H(u),r\}\le|X|r$.
    The assertion follows.
\end{proof}

For the case $r=n$, the target inequality follows
from a simple estimate as soon as
the order of the graph is large compared with its diameter.

\begin{lemma}\label{lem:terminal-wiener}
    Let $G$ be a connected graph of order $n$ and diameter at most $D$.
    If $n\ge 3D$, then the target inequality holds for $r=n$.
\end{lemma}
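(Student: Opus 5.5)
For $r=n$ the left-hand side of the target inequality is the full trace, so the plan is a direct computation. We have $U_n(G)=\tr(\DL(G))=\sum_v \Tr(v)=2W(G)$. The target inequality at $r=n$ is therefore equivalent to
\begin{equation}
    W(G)\le\binom{n+2}{3}.
\end{equation}
So the whole task reduces to an upper bound on the Wiener index.

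Since every distance is at most $D$, we get
\begin{equation}
    W(G)\le D\binom{n}{2}=\frac{Dn(n-1)}{2}.
\end{equation}
On the other hand,
\begin{equation}
    \binom{n+2}{3}=\frac{(n+2)(n+1)n}{6}.
\end{equation}
It therefore suffices to show $3D(n-1)\le (n+2)(n+1)$. Using the hypothesis $3D\le n$, we have $3D(n-1)\le n(n-1)<(n+1)(n+2)$, which is exactly what is needed. This completes the argument; it even gives strict inequality.

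There is no real obstacle. The one step to get right is the identity $\tr(\DL(G))=2W(G)$, which holds because each unordered pair $\{u,v\}$ contributes $d_G(u,v)$ to both $\Tr(u)$ and $\Tr(v)$. The trivial bound $d_G(u,v)\le D$ then suffices, and the condition $n\ge 3D$ is comfortably stronger than what is required.
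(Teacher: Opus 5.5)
Your proposal is correct and follows essentially the same route as the paper: reduce $r=n$ to $W(G)\le\binom{n+2}{3}$ via $U_n(G)=\tr(\DL(G))=2W(G)$, then use $W(G)\le D\binom n2\le\frac n3\binom n2$ and compare with $\binom{n+2}{3}$. Your extra justification of the trace identity and the observation that the inequality is strict are fine additions.
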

\begin{proof}
    Since $U_n(G)=\tr(\DL(G))=2W(G)$,
    it is enough to show
    $W(G)\le\binom{n+2}{3}$.
    Indeed,
    \begin{equation}
        W(G)\le D\binom n2
        \le\frac n3\binom n2
        =\frac{n^2(n-1)}6
        \le\binom{n+2}{3}.
    \end{equation}
    This completes the proof.
\end{proof}

\section{Graphs with bounded diameter}\label{sec:3}
In this section we prove the target inequality
for all connected graphs
whose diameter is bounded by a fixed constant $D$,
provided that the order $n$ is sufficiently large in terms of $D$.
The main idea is to decompose the distance Laplacian matrix
into ordinary Laplacian matrices of graphs
recording which vertex pairs are at distance at least $s$,
and then reduce the target inequality to a numerical estimate.

For an integer $2\le s\le D$,
we define $Y_s=Y_s(G)$ to be a new graph on $V(G)$
whose edges are $\{u,v\}$ such that $d_G(u,v)\ge s$.  
Equivalently, $Y_s$ is the complement of the $(s-1)$-th power of $G$.
For convenience, we call it
the \emph{distance-at-least-$s$} graph.

We prove that $D(G)$
can be expressed in terms of $I$ (the identity matrix),
$J$ (the all-ones matrix)
and sums of $A(Y_s)$ (the adjacency matrix of $Y_s$);
while $\DL(G)$ can be expressed in terms of $I,\,J$
and sums of $L(Y_s)$ (the Laplacian matrix of $Y_s$).

\begin{lemma}\label{lem:layer-decomp}
    If $G$ is connected and $\diam(G)\le D$, then
    \begin{equation}
        D(G)=J-I+\sum_{s=2}^D A(Y_s),\qquad
        \DL(G)=nI-J+\sum_{s=2}^D L(Y_s),
    \end{equation}
    where $A(H)$ and $L(H)$
    denote the adjacency matrix and the Laplacian matrix
    of a graph $H$, respectively.
\end{lemma}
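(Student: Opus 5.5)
The plan is to verify the first identity entrywise and then obtain the second from it by taking row sums. For the distance matrix, I will write each distance as a telescoping count of thresholds: for a positive integer $d\le D$,
\begin{equation}
    d=1+\sum_{s=2}^{D}[d\ge s],
\end{equation}
where $[\cdot]$ denotes the indicator. Now fix distinct $u,v\in V(G)$. Since $G$ is connected with $\diam(G)\le D$, the distance $d_G(u,v)$ is an integer in $\{1,\dots,D\}$. The $(u,v)$ entry of $J-I$ is $1$, and the $(u,v)$ entry of $A(Y_s)$ is exactly $[d_G(u,v)\ge s]$, by the definition of $Y_s$. So the off-diagonal entries agree. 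On the diagonal, $d_G(u,u)=0$; the entry of $J-I$ is $0$, and each $A(Y_s)$ has zero diagonal because $Y_s$ is a simple graph ($d_G(u,u)=0<s$). This proves $D(G)=J-I+\sum_{s=2}^D A(Y_s)$.

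For the distance Laplacian, I will use $\DL(G)=\diag(\Tr(v))-D(G)$. Summing the first identity along row $v$ gives
\begin{equation}
    \Tr(v)=(n-1)+\sum_{s=2}^D \deg_{Y_s}(v).
\end{equation}
Hence
\begin{equation}
    \diag(\Tr(v))=(n-1)I+\sum_{s=2}^D \diag(\deg_{Y_s}(v)).
\end{equation}
Subtracting the expression for $D(G)$ gives
\begin{equation}
    (n-1)I-(J-I)+\sum_{s=2}^D\bigl(\diag(\deg_{Y_s})-A(Y_s)\bigr)=nI-J+\sum_{s=2}^D L(Y_s),
\end{equation}
as required.

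There is no real obstacle here. The only points needing care are that the diameter bound guarantees every off-diagonal distance lies in $\{1,\dots,D\}$, and that the diagonal entries match in both identities. When $D=1$ the sums are empty and the formulas reduce to those for $K_n$; this case is harmless even though the section assumes $s\ge2$.
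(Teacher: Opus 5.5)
Your proof is correct and essentially matches the paper's argument. The paper writes $J-I=\sum_{i=1}^D A_i$ and $A(Y_s)=\sum_{i=s}^D A_i$ using the distance-$i$ matrices $A_i$, and sums these to get $\sum_i iA_i=D(G)$; this is the matrix form of your entrywise threshold count. It then obtains the second identity by applying the linear map $M\mapsto\diag(M\mathbf 1)-M$, which is exactly your row-sum step.
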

\begin{proof}
    Let $A_i$ be the \emph{$i$-th distance matrix} of $G$,
    as in~\cite[Section~2.3]{Williams},
    whose $(u,v)$-entry is $1$ when $d_G(u,v)=i$,
    and is $0$ otherwise.
    Then
    \begin{equation}
        J-I=\sum_{i=1}^D A_i,\qquad
        A(Y_s)=\sum_{i=s}^D A_i.
    \end{equation}
    Therefore
    \begin{equation}
        J-I+\sum_{s=2}^D A(Y_s)
        =\sum_{i=1}^D A_i+\sum_{s=2}^D\sum_{i=s}^D A_i
        =\sum_{i=1}^D iA_i
        =D(G).
    \end{equation}
    This is the first identity.

    For the second identity,
    denote by $\mathbf 1$ the all-ones vector.
    Then applying the linear map
    \begin{equation}
        M\mapsto\diag(M\mathbf 1)-M
    \end{equation}
    to the first identity gives the second one, since
    \begin{equation}
        \diag((J-I)\mathbf 1)-(J-I)=nI-J
    \end{equation}
    and $L(H)=\diag(A(H)\mathbf 1)-A(H)$ for every graph $H$.
\end{proof}

These graphs also give a convenient expression
for the Wiener index.

\begin{lemma}\label{lem:layer-wiener}
    If $G$ is connected and $\diam(G)\le D$, then
    \begin{equation}
        W(G)=\binom n2+\sum_{s=2}^D m(Y_s).
    \end{equation}
\end{lemma}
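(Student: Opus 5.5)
The plan is to obtain this identity from the first identity of Lemma~\ref{lem:layer-decomp}, summing entries over unordered pairs; alternatively, I will count layer by layer directly. The direct route is short, so I will follow it and mention the matrix route as a check.

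For each unordered pair $\{u,v\}$ of distinct vertices, write $d=d_G(u,v)$. Since $G$ is connected and $\diam(G)\le D$, we have $1\le d\le D$. The pair is an edge of $Y_s$ exactly for those $s\in\{2,\dots,D\}$ with $s\le d$, and there are $d-1$ such values of $s$. Hence
\begin{equation}
    d_G(u,v)=1+\#\{s\in\{2,\dots,D\}:\{u,v\}\in E(Y_s)\}.
\end{equation}
Summing over all $\binom n2$ unordered pairs and exchanging the order of summation gives
\begin{equation}
    W(G)=\binom n2+\sum_{s=2}^D m(Y_s).
\end{equation}
Equivalently, one can sum all entries of both sides of $D(G)=J-I+\sum_{s=2}^D A(Y_s)$, using $\mathbf 1^\top D(G)\mathbf 1=2W(G)$, $\mathbf 1^\top(J-I)\mathbf 1=n(n-1)$ and $\mathbf 1^\top A(Y_s)\mathbf 1=2m(Y_s)$, and then divide by $2$.

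There is no real obstacle here. The only point needing care is that the sum over $s$ runs exactly over $2\le s\le d$, so each pair contributes $d-1$. This relies on every distance being at least $1$ and at most $D$, which follows from connectivity and the diameter bound.
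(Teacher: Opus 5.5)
Your proof is correct and is essentially the paper's argument: each unordered pair contributes $1$ plus one unit for each $s$ with $2\le s\le d_G(u,v)$, and summing over pairs gives the identity. Your alternative check, summing all entries of $D(G)=J-I+\sum_{s=2}^D A(Y_s)$ from Lemma~\ref{lem:layer-decomp} and halving, is also valid.
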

\begin{proof}
    Each vertex pair $\{u,v\}$ contributes one unit to $W(G)$,
    and then contributes one additional unit for every
    $s\ge 2$ not exceeding $d_G(u,v)$.  Thus
    \begin{equation}
        W(G)
        =\sum_{\{u,v\}\subseteq V(G)}1
        +\sum_{s=2}^D\left|\{\{u,v\}:d_G(u,v)\ge s\}\right|
        =\binom n2+\sum_{s=2}^D m(Y_s).
    \end{equation}
\end{proof}

The next lemma converts bounds
for the Laplacian eigenvalues
of the graphs $Y_s$ into the target inequality.

\begin{lemma}\label{lem:layer-sum-reduction}
    Let $G$ be connected, have order $n$,
    and satisfy $\diam(G)\le D$.
    Suppose that for some number $B$,
    \begin{equation}
        \sum_{s=2}^D\sum_{i=1}^r\lambda_i(L(Y_s))
        \le\sum_{s=2}^D m(Y_s)+Br
    \end{equation}
    holds for $1\le r\le n-1$.
    Then the target inequality holds for such $r$ once
    \begin{equation}
        \binom n2+\binom{r+2}{3}-(n+B)r\ge 0.
    \end{equation}
\end{lemma}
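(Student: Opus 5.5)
We combine the decomposition of Lemma~\ref{lem:layer-decomp} with the Ky Fan inequality (Lemma~\ref{lem:ky-fan}). The argument is a short calculation.

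\emph{Step 1: bound $U_r(G)$.} Write
\begin{equation}
\DL(G)=(nI-J)+\sum_{s=2}^D L(Y_s).
\end{equation}
The matrix $nI-J$ is symmetric with eigenvalues $n$ (multiplicity $n-1$) and $0$. Hence, for $1\le r\le n-1$, the sum of its $r$ largest eigenvalues is exactly $nr$. Each $L(Y_s)$ is a Laplacian matrix, hence symmetric. Applying Lemma~\ref{lem:ky-fan} repeatedly (by induction on the number of summands) gives
\begin{equation}
U_r(G)\le nr+\sum_{s=2}^D\sum_{i=1}^r\lambda_i(L(Y_s))\le nr+\sum_{s=2}^D m(Y_s)+Br,
\end{equation}
where the second inequality is the hypothesis. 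One caveat: if some $Y_s$ has no edges, then $L(Y_s)=0$. This is harmless, because all Laplacian eigenvalues are nonnegative and the hypothesis already covers that term.

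\emph{Step 2: compare with the right-hand side.} By Lemma~\ref{lem:layer-wiener},
\begin{equation}
W(G)+\binom{r+2}{3}=\binom n2+\sum_{s=2}^D m(Y_s)+\binom{r+2}{3}.
\end{equation}
Subtracting the bound from Step~1, the terms $\sum_s m(Y_s)$ cancel, and we obtain
\begin{equation}
W(G)+\binom{r+2}{3}-U_r(G)\ge\binom n2+\binom{r+2}{3}-(n+B)r.
\end{equation}
This is nonnegative under the stated condition, which gives the target inequality for that $r$.

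The only point requiring care is the eigenvalue count for $nI-J$. One must check that $r\le n-1$ ensures the top $r$ eigenvalues all equal $n$. This is exactly why the lemma is restricted to $r\le n-1$; the case $r=n$ is handled separately by Lemma~\ref{lem:terminal-wiener}. No real obstacle is expected beyond this bookkeeping.
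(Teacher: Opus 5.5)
Your proposal is correct and follows essentially the same route as the paper: you decompose $\DL(G)=nI-J+\sum_{s=2}^D L(Y_s)$ via Lemma~\ref{lem:layer-decomp}, apply Ky Fan with the top-$r$ eigenvalue sum $rn$ of $nI-J$ for $r\le n-1$, and then cancel $\sum_s m(Y_s)$ against the Wiener-index formula of Lemma~\ref{lem:layer-wiener}. Your remarks on iterating Ky Fan and on edgeless $Y_s$ are harmless extra bookkeeping that the paper leaves implicit.
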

\begin{proof}
    By Lemma~\ref{lem:layer-decomp},
    $\DL(G)=nI-J+\sum_{s=2}^D L(Y_s)$.
    The matrix $nI-J$ has sum of its top $r$ eigenvalues
    equal to $rn$ for $r\le n-1$.
    Lemma~\ref{lem:ky-fan} then gives
    \begin{equation}
        U_r(G)
        \le rn+\sum_{s=2}^D\sum_{i=1}^r\lambda_i(L(Y_s))
        \le \sum_{s=2}^D m(Y_s)+(n+B)r.
    \end{equation}
    By Lemma~\ref{lem:layer-wiener},
    $W(G)=\binom n2+\sum_{s=2}^D m(Y_s)$.
    Therefore
    \begin{equation}
        \begin{aligned}
            W(G)-U_r(G)+\binom{r+2}{3}
            &\ge W(G)-\sum_{s=2}^D m(Y_s)-(n+B)r+\binom{r+2}{3} \\
            &=\binom n2+\binom{r+2}{3}-(n+B)r
            \ge 0,
        \end{aligned}
    \end{equation}
    which is the target inequality.
\end{proof}

By the lemma above, we can derive the main result
of this section.

\begin{theorem}
    Let $D\ge 2$.
    If $G$ is connected,
    has order $n$,
    satisfies $\diam(G)\le D$, and
    \begin{equation}\label{eq:fixed-diam-n-aspt}
        n\ge \left\lceil \frac49(D+1)^3\right\rceil,
    \end{equation}
    then the target inequality holds for every $1\le r\le n$.
\end{theorem}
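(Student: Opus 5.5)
We apply Lemma~\ref{lem:layer-sum-reduction} with the cheapest available bound for each layer graph, and then do a one-variable calculus estimate in $r$. The case $r=n$ is handled separately. Here $U_n=2W(G)$, and $\lceil\frac49(D+1)^3\rceil\ge 3D$ for all $D\ge 2$ (for $D=2$ this reads $12\ge 6$, and the cubic dominates afterwards). So Lemma~\ref{lem:terminal-wiener} gives $r=n$ at once.

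For $1\le r\le n-1$, I apply the second inequality of Lemma~\ref{lem:gmb} to each $Y_s$, $2\le s\le D$. Each $Y_s$ has $n$ vertices, so $\sum_{i\le r}\lambda_i(L(Y_s))\le m(Y_s)+nr/2$. Summing over the $D-1$ layers gives the hypothesis of Lemma~\ref{lem:layer-sum-reduction} with $B=(D-1)n/2$. It then remains to prove
\begin{equation*}
f(r)\coloneqq\frac{n(n-1)}2+\frac{r(r+1)(r+2)}6-\frac{(D+1)nr}{2}\ge 0\qquad(1\le r\le n-1).
\end{equation*}

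The heuristic is the continuous model $g(r)=\frac{n^2}2+\frac{r^3}6-\frac{(D+1)nr}2$. It is minimized at $r_*=\sqrt{(D+1)n}$, where
\begin{equation*}
g(r_*)=\frac{n^2}{2}-\frac13\bigl((D+1)n\bigr)^{3/2}.
\end{equation*}
This is nonnegative exactly when $\sqrt n\ge\frac23(D+1)^{3/2}$, i.e.\ $n\ge\frac49(D+1)^3$, which explains the threshold in~\eqref{eq:fixed-diam-n-aspt}. To make this rigorous, note that $f(r)-g(r)=\frac{r^2}2+\frac r3-\frac n2$. I therefore split into two ranges of $r$.

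If $r\ge\sqrt n$, then $f(r)\ge g(r)\ge g(r_*)\ge 0$.

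If $r<\sqrt n$, I bound $f$ directly. We have $f(r)\ge\frac n2\bigl((n-1)-(D+1)r\bigr)$, which is nonnegative whenever $(D+1)r\le n-1$. So it suffices that $(D+1)\sqrt n\le n-1$, i.e.\ roughly $\sqrt n\ge D+1$. This follows from $n\ge\frac49(D+1)^3$ as soon as $\frac49(D+1)\ge 1$, up to the $-1$ slack.

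The main obstacle I expect is exactly this bookkeeping in the small-$D$ cases. For $D=2$ we get $n\ge 12$, and the crude bound $(D+1)r\le n-1$ only covers $r\le 3$, while $\sqrt{12}\approx 3.46$. In such cases I would close the gap with the positive terms I discarded: $\frac{r^2}{2}+\frac r3$ and the cubic term $\frac{r(r+1)(r+2)}{6}$. I would also use integrality of $r$, checking $f(r)\ge 0$ directly at the few integers near the boundary (e.g.\ $D=2$, $n=12$: $f(3)=66+10-54>0$, $f(4)=66+20-72>0$). Finally, $f$ is a cubic in $r$ with positive leading coefficient, so it is convex on $r\ge 0$. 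Hence it suffices to verify nonnegativity at the integer points adjacent to the real minimizer. This reduces any residual small cases to a finite check.
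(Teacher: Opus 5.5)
Your proposal is the paper's proof essentially step for step: the same $B=(D-1)n/2$ from Lemma~\ref{lem:gmb} fed into Lemma~\ref{lem:layer-sum-reduction}, the same split at $r=\sqrt n$ (the cubic minimized at $\sqrt{(D+1)n}$ for $r\ge\sqrt n$, and the linear bound $(D+1)r\le n-1$ for $r<\sqrt n$), and Lemma~\ref{lem:terminal-wiener} for $r=n$. The one step you left as ``roughly'', namely $(D+1)\sqrt n\le n-1$, holds for every $D\ge 2$ with no case analysis: $n-1-(D+1)\sqrt n$ is increasing in $n$ on this range, and at $n=\frac49(D+1)^3$ it equals $\frac49(D+1)^3-\frac23(D+1)^{5/2}-1$, which increases with $D$ and equals $11-6\sqrt3>0$ at $D=2$ (e.g.\ $3\sqrt{12}\approx 10.39\le 11$), so the small-$D$ obstacle you anticipate does not arise and the convexity/finite-check fallback is unnecessary.
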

\begin{proof}
    We first treat the range $1\le r\le n-1$.
    Lemma~\ref{lem:gmb} gives
    \begin{equation}
        \sum_{s=2}^D\sum_{i=1}^r\lambda_i(L(Y_s))
        \le \sum_{s=2}^D m(Y_s)+\frac{(D-1)nr}{2}.
    \end{equation}
    Write $c=(D+1)/2$.
    By Lemma~\ref{lem:layer-sum-reduction} with $B=(D-1)n/2$,
    the target inequality follows once
    \begin{equation}\label{eq:sec3-1}
        \binom n2+\binom{r+2}{3}-cnr
        =\frac{n(n-1)}{2}+\frac{r(r+1)(r+2)}{6}-cnr\ge 0
        \qquad (1\le r\le n-1).
    \end{equation}
    Multiplying both sides by $6$,
    the left side of~\eqref{eq:sec3-1} becomes
    \begin{align}
        P_D(n,r)&\coloneqq 3n(n-1)+r(r+1)(r+2)-6cnr \\
        &=3n(n-1-2cr)+r(r+1)(r+2) \\
        &=3n^2-3n+r^3+3r^2+2r-6cnr.
    \end{align}
    By~\eqref{eq:fixed-diam-n-aspt} and $D+1=2c$,
    we obtain
    \begin{equation}
        n\ge \frac49(D+1)^3=\frac{32}{9}c^3.
    \end{equation}

    First suppose $r<\sqrt n$.
    Since $D\ge 2$, we have $c\ge 3/2$.
    The function
    \begin{equation}
        \frac{32}{9}c^3-\frac{8\sqrt2}{3}c^{5/2}-1
    \end{equation}
    is increasing for $c\ge 3/2$
    and equals $11-6\sqrt3>0$ at $c=3/2$.
    Hence
    \begin{equation}
        n-1-2c\sqrt n
        \ge\frac{32}{9}c^3-1-2c\sqrt{\frac{32}{9}c^3}
        =\frac{32}{9}c^3-\frac{8\sqrt2}{3}c^{5/2}-1
        \ge 0.
    \end{equation}
    Therefore
    \begin{align}
        P_D(n,r)&=3n(n-1-2cr)+r(r+1)(r+2) \\
        &>3n(n-1-2c\sqrt{n})+r(r+1)(r+2)\ge 0,
    \end{align}
    which proves~\eqref{eq:sec3-1} for $r<\sqrt{n}$.
    Now suppose $r\ge\sqrt n$.
    Since the minimum of
    the function $x^3-Bx$ over $x\ge 0$
    is $-2B^{3/2}/(3\sqrt3)$, we have
    \begin{equation}\label{eq:sec3-2}
        r^3-6cnr
        \ge\frac{-2(6cn)^{3/2}}{3\sqrt{3}}
        =-4\sqrt2\,c^{3/2}n^{3/2}
    \end{equation}
    for every $r\ge 0$.
    Also $n\ge 32c^3/9$ implies
    \begin{equation}\label{eq:sec3-3}
        n^4\ge\frac{32c^3n^3}{9}
        \quad\implies\quad
        n^2-\frac{4\sqrt{2}}{3}c^{3/2}n^{3/2}\ge 0
        \quad\implies\quad
        3n^2-4\sqrt{2}\,c^{3/2}n^{3/2}\ge 0.
    \end{equation}
    Adding~\eqref{eq:sec3-2} and~\eqref{eq:sec3-3} gives
    \begin{equation}
        r^3+3n^2-6cnr\ge 0
        \quad\implies\quad
        3n^2-6cnr\ge -r^3.
    \end{equation}
    Thus
    \begin{equation}
        P_D(n,r)=(3n^2-6cnr)+(-3n+r^3+3r^2+2r)
        \ge -3n+3r^2+2r
        \ge 2r
        \ge 0.
    \end{equation}
    This proves~\eqref{eq:sec3-1} for $r\ge\sqrt{n}$,
    and hence the target inequality,
    for every $1\le r\le n-1$.

    Finally,~\eqref{eq:fixed-diam-n-aspt}
    implies $n\ge 3D$ when $D\ge 2$,
    so the case $r=n$ directly follows
    from Lemma~\ref{lem:terminal-wiener}.
\end{proof}

By taking $D=3,4,5$, we have the following corollary.

\begin{corollary}\label{cor:diam-three-four}
    The target inequality holds for every connected graph $G$
    of order $n$ in each of the following cases:
    \begin{enumerate}
        \item $\diam(G)\le 3$ and $n\ge 29$;
        \item $\diam(G)\le 4$ and $n\ge 56$;
        \item $\diam(G)\le 5$ and $n\ge 96$.
    \end{enumerate}
\end{corollary}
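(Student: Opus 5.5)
The corollary should follow directly from the preceding theorem by substituting $D=3,4,5$ and evaluating the threshold $\left\lceil \frac49(D+1)^3\right\rceil$. The hypothesis $\diam(G)\le D$ matches the corollary's conditions exactly, so no further graph-theoretic input is needed. The only work is arithmetic, and the plan is to compute each threshold and check it against the stated bound.

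For $D=3$ we have $(D+1)^3=64$ and $\frac49\cdot 64=\frac{256}{9}\approx 28.44$, so the ceiling is $29$. This gives the first item.

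For $D=4$ we have $(D+1)^3=125$ and $\frac{500}{9}\approx 55.56$, so the ceiling is $56$. This gives the second item.

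For $D=5$ we have $(D+1)^3=216$ and $\frac49\cdot 216=96$ exactly, so the ceiling is $96$. This gives the third item.

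In each case the stated lower bound on $n$ equals the theorem's threshold, so the theorem applies for every $1\le r\le n$. There is no real obstacle here. The one point to verify is that the theorem's proof already handles the terminal case $r=n$ through Lemma~\ref{lem:terminal-wiener}, which needs $n\ge 3D$. That condition clearly holds in all three cases, since $29\ge 9$, $56\ge 12$ and $96\ge 15$.
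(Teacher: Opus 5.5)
Your proposal is correct and matches the paper's argument, which likewise obtains the corollary by substituting $D=3,4,5$ into the bounded-diameter theorem; your threshold computations $\lceil 256/9\rceil=29$, $\lceil 500/9\rceil=56$ and $\lceil 864/9\rceil=96$ are right. The separate check of $n\ge 3D$ for $r=n$ is harmless but redundant, since the theorem already covers every $1\le r\le n$.
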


Corollary~\ref{cor:diam-three-four}
considerably strengthens the diameter-$3$ and diameter-$4$
results of Mushtaq~et~al.~\cite{Mushtaq}.
Their diameter-$3$ results apply to
the special class $\Gamma_n$ for $n\ge 95$,
while our corollary covers all connected graphs
of diameter at most $3$ once $n\ge 29$.
For diameter $4$, they proved the inequality
for sun graphs and partial sun graphs
under additional parameter restrictions,
whereas our corollary applies to every connected graph
of diameter at most $4$ once $n\ge 56$.

\section{Diameter-\texorpdfstring{$2$}{2} graphs}\label{sec:4}
In this section,
we fully determine the diameter-$2$ case.
The key technique is that, for such graphs,
the distance Laplacian matrix can be written
in terms of the Laplacian matrices of their graph complements.
This allows us to reduce most values of $r$
to known cases of the original Brouwer's conjecture
and then handle the remaining small cases directly.

We first record the validity of the original Brouwer's conjecture
for several values of $r$,
which will be useful later.

\begin{lemma}\label{lem:original-brouwer-known}
    Let $H$ be a graph of order $n$, size $m$,
    and ordinary Laplacian eigenvalues
    $\lambda_1(H)\ge\cdots\ge\lambda_n(H)=0$.  Put
    $S_r(H)=\sum_{i=1}^r\lambda_i(H)$. Then
    \begin{equation}
        S_r(H)\le m+\binom{r+1}{2}
    \end{equation}
    holds for $r\in\{2,3,n-1,n\}$.
    Moreover, if $n\ge 3$, then this inequality
    also holds for $r=n-2$;
    if $n\ge 5$, then this inequality
    also holds for $r=n-3$.
\end{lemma}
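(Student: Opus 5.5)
This lemma collects known cases of Brouwer's conjecture, so the plan is mostly to cite the literature and to reduce the ``top'' cases $r\in\{n-1,n-2,n-3\}$ to the ``bottom'' cases $r\in\{1,2,3\}$ by complementation. For $r=2$ and $r=3$ we simply invoke Haemers et al.~\cite{Haemers} and Wang et al.~\cite{Wang}. For $r=n$ and $r=n-1$ the claim is immediate from the trace identity. Since $\lambda_n=0$, we have $S_{n-1}(H)=S_n(H)=\tr L(H)=2m$. It therefore suffices to check $2m\le m+\binom{n}{2}$, which is exactly $m\le\binom n2$.

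For $r=n-2$ and $r=n-3$ we pass to the complement $\bar H$, which has $\bar m=\binom n2-m$ edges. The standard fact is that the nonzero-indexed Laplacian eigenvalues of $\bar H$ are $n-\lambda_{n-i}(H)$ for $1\le i\le n-1$. Hence for $1\le k\le n-1$,
\begin{equation}
  S_{n-1-k}(H)=2m-\sum_{i=n-k}^{n-1}\lambda_i(H)=2m-\bigl(kn-S_k(\bar H)\bigr).
\end{equation}
Taking $r=n-1-k$, the desired inequality $S_r(H)\le m+\binom{r+1}{2}$ becomes
\begin{equation}
  m+S_k(\bar H)\le kn+\binom{n-k}{2}.
\end{equation}
Writing $m=\binom n2-\bar m$ turns this into
\begin{equation}
  S_k(\bar H)\le \bar m+kn+\binom{n-k}{2}-\binom n2 .
\end{equation}
A short computation gives $kn+\binom{n-k}2-\binom n2=\binom{k+1}{2}$. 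So the inequality for $H$ at $r=n-1-k$ is equivalent to Brouwer's inequality for $\bar H$ at $r=k$.

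We then apply this equivalence case by case. Taking $k=1$ (so $r=n-2$, which needs $n\ge 3$ for $r\ge1$), we need $\lambda_1(\bar H)\le\bar m+1$. This is the classical $r=1$ case: $\lambda_1\le n$ always, and more sharply $\lambda_1\le\max_{uv\in E}(d_u+d_v)\le m+1$ for a graph with at least one edge, while it is trivial if $\bar m=0$. Taking $k=2$ (so $r=n-3$, with $n\ge5$ ensuring $r\ge 2$ and avoiding degenerate small orders), we need the $r=2$ case for $\bar H$, which is Haemers et al. again.

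The main obstacle is only bookkeeping. We must verify the complement eigenvalue correspondence, including the zero eigenvalue and possibly disconnected $\bar H$; the complement relation $L(\bar H)=nI-J-L(H)$ on $\mathbf 1^\perp$ handles both. We must also check the binomial identity and the small-$n$ hypotheses. Alternatively, these cases could be cited directly from Brouwer--Haemers~\cite{Brouwer}, where they are noted.
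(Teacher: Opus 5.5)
Your proof is correct. For $r\in\{2,3,n-1,n\}$ it is the same as the paper's: the citations to Haemers et al.\ and Wang et al., plus the trace identity $S_{n-1}(H)=S_n(H)=2m$.

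The two proofs differ on $r=n-2$ and $r=n-3$. The paper just cites Chen~\cite[Corollaries~3.2 and~3.3]{Chen2019}. You instead derive these cases from the complement duality
\[
S_{n-1-k}(H)\le m+\binom{n-k}{2}\iff S_k(\bar H)\le \bar m+\binom{k+1}{2}.
\]
Both ingredients of this duality check out: $L(\bar H)=nI-J-L(H)$ on $\mathbf 1^\perp$ together with $\lambda_1(H)\le n$, and the identity $kn+\binom{n-k}{2}-\binom n2=\binom{k+1}{2}$.

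Your route makes the lemma self-contained apart from the $k=1$ and $k=2$ cases. It also explains why exactly $n-2$ and $n-3$ appear: they are the mirror images of $r=1$ and $r=2$ under complementation. The paper's route is shorter but hides this mechanism inside the citation.

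Your one added outside ingredient is the Anderson--Morley bound $\lambda_1\le\max_{uv\in E}(d_u+d_v)$, which should be cited. You can avoid it entirely. The Laplacian spectrum of $\bar H$ is the union of the spectra of its components. So $\lambda_1(\bar H)$ is attained on some component with $n_c$ vertices and $m_c\ge n_c-1$ edges, giving
\[
\lambda_1(\bar H)\le n_c\le m_c+1\le \bar m+1.
\]
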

\begin{proof}
    The case $r=2$ is proved by Haemers~\cite{Haemers}.
    The case $r=3$ is proved by Wang~et~al.~\cite{Wang}.
    The cases $r=n-2,\,n-3$ then follow from
    Chen~\cite[Corollary~3.2 and~3.3]{Chen2019}.
    For $r=n-1$, since $\lambda_n(H)=0$,
    we have $S_{n-1}(H)=2m$,
    and the desired inequality follows from
    $2m\le m+\binom n2$.
    For $r=n$, we also have $S_n(H)=2m$,
    and the desired inequality follows from
    $2m\le m+\binom{n+1}{2}$.
\end{proof}

For diameter-$2$ graphs, the distance Laplacian matrix
can be expressed through the Laplacian matrix
of their complement graphs.

\begin{lemma}\label{lem:sec4-transfer}
    Let $G$ be a connected graph of order $n$ and diameter $2$,
    and put $H=\overline G$, the complement of $G$.
    Then
    \begin{equation}
        D(G)=J-I+A(H),\qquad
        \DL(G)=nI-J+L(H).
    \end{equation}
    Consequently,
    \begin{equation}
        \partial_i^L(G)=n+\lambda_i(H)\quad(1\le i\le n-1),\qquad
        \partial_n^L(G)=0.
    \end{equation}
\end{lemma}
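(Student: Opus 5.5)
The matrix identities are the case $D=2$ of Lemma~\ref{lem:layer-decomp}. When $\diam(G)=2$, the distance-at-least-$2$ graph $Y_2$ has as edges exactly the non-adjacent pairs of distinct vertices, so $Y_2=\overline G=H$. Lemma~\ref{lem:layer-decomp} then gives $D(G)=J-I+A(H)$ and $\DL(G)=nI-J+L(H)$ at once. One can also check this entrywise: off-diagonal entries of $D(G)$ are $1$ on edges of $G$ and $2$ on edges of $H$, while diagonal entries are $0$. Applying $M\mapsto\diag(M\mathbf 1)-M$ then yields the Laplacian form.

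For the spectral statement, I will use the fact that $L(H)$ and $nI-J$ commute. Since $L(H)\mathbf 1=0$, we have $L(H)J=JL(H)=0$. Hence $L(H)$ has an orthonormal eigenbasis $\mathbf 1/\sqrt n=w_n,w_1,\dots,w_{n-1}$, with $L(H)w_i=\lambda_i(H)w_i$ and $w_i\perp\mathbf 1$ for $i\le n-1$. On this basis:
\begin{itemize}
\item $Jw_i=0$ for $i\le n-1$, so $\DL(G)w_i=(n+\lambda_i(H))w_i$;
\item $\DL(G)\mathbf 1=n\mathbf 1-n\mathbf 1+0=0$.
\end{itemize}
This exhibits the full spectrum of $\DL(G)$ as $\{n+\lambda_i(H):1\le i\le n-1\}\cup\{0\}$.

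The only real point is the ordering. Each $n+\lambda_i(H)\ge n>0$, so the eigenvalue $0$ is the smallest and equals $\partial_n^L(G)$. The remaining $n-1$ eigenvalues appear in the same order as $\lambda_1(H)\ge\dots\ge\lambda_{n-1}(H)$, which gives $\partial_i^L(G)=n+\lambda_i(H)$ for $1\le i\le n-1$.

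A subtle case is $H$ disconnected, where $\lambda_{n-1}(H)=0$ may occur. The indexing still holds, because we pair the eigenvector $\mathbf 1$ specifically with $\lambda_n(H)=0$ and the other zero eigenvectors are orthogonal to $\mathbf 1$. I do not expect any real obstacle.
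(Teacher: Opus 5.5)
Your proof is correct and follows essentially the paper's route. The paper verifies $D(G)=A(G)+2A(H)=J-I+A(H)$ and $\Tr_G(v)=n-1+\deg_H(v)$ directly, which is equivalent to your specialization of Lemma~\ref{lem:layer-decomp} with $Y_2=H$; it then observes that $\DL(G)$ acts as $nI+L(H)$ on the invariant subspace $\mathbf 1^\perp$ and as $0$ on $\mathbf 1$, which is your eigenbasis argument, with your ordering remark ($n+\lambda_i(H)\ge n>0$) making explicit a step the paper leaves implicit.
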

\begin{proof}
    In a graph with diameter $2$,
    nonadjacent vertex pairs are exactly
    the pairs at distance $2$. Hence
    \begin{equation}
        D(G)=A(G)+2A(H)=J-I-A(H)+2A(H)=J-I+A(H).
    \end{equation}
    Also
    \begin{equation}
        \Tr_G(v)=\deg_G(v)+2\deg_H(v)=n-1+\deg_H(v),
    \end{equation}
    and so
    \begin{equation}
        \DL(G)=\diag(n-1+\deg_H(v))-(J-I+A(H))=nI-J+L(H).
    \end{equation}
    Since $J$ vanishes on $\mathbf 1^\perp$,
    the subspace $\mathbf 1^\perp$ is invariant under $\DL(G)$,
    and there $\DL(G)$ acts as $nI+L(H)$.
    On the span of $\mathbf 1$, it acts as zero.
    The spectral formula then follows.
\end{proof}

This also gives the relations between $U_r$ and $S_r$.

\begin{lemma}\label{lem:sec4-sums}
    Let $G$ be a connected graph of order $n$ and diameter $2$,
    and put $H=\overline G$. Then
    \begin{equation}
        U_r(G)=rn+S_r(H)\qquad(1\le r\le n-1).
    \end{equation}
\end{lemma}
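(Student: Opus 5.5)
This follows directly from the spectral formula in Lemma~\ref{lem:sec4-transfer}. We sum the relation $\partial_i^L(G)=n+\lambda_i(H)$ over $1\le i\le r$, which requires $r\le n-1$ so that every index used lies in the range where the formula applies. The plan is as follows.

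First, we recall that $\DL(G)=nI-J+L(H)$. The all-ones vector $\mathbf 1$ is a common eigenvector: $J\mathbf 1=n\mathbf 1$ and $L(H)\mathbf 1=0$. Hence $\DL(G)\mathbf 1=0$.

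On $\mathbf 1^\perp$, the operator $\DL(G)$ acts as $nI+L(H)$. Because $L(H)$ is symmetric and fixes $\mathbf 1$ (with eigenvalue $0$), it preserves $\mathbf 1^\perp$. Its eigenvalues there are $n-1$ of the values $\lambda_1(H)\ge\cdots\ge\lambda_n(H)=0$, namely all of them with one copy of the eigenvalue $0$ removed. So these are $\lambda_1(H),\dots,\lambda_{n-1}(H)$.

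Consequently, the eigenvalues of $\DL(G)$ on $\mathbf 1^\perp$ are $n+\lambda_i(H)$ for $1\le i\le n-1$. Each of these is at least $n>0$, so all of them dominate the eigenvalue $0$ coming from $\mathbf 1$. The ordering $\partial_1^L(G)\ge\cdots\ge\partial_{n-1}^L(G)$ therefore coincides with the ordering of $\lambda_1(H)\ge\cdots\ge\lambda_{n-1}(H)$ shifted by $n$, and $\partial_n^L(G)=0$. This is exactly the content of Lemma~\ref{lem:sec4-transfer}, which we take as given.

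Summing over $i=1,\dots,r$ with $r\le n-1$ gives
\begin{equation}
U_r(G)=\sum_{i=1}^r\bigl(n+\lambda_i(H)\bigr)=rn+S_r(H).
\end{equation}

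The only point needing care is the ordering: the zero eigenvalue of $\DL(G)$ must sit at position $n$, so that no index $i\le r\le n-1$ picks it up. This holds because the other eigenvalues are at least $n$. There is no real obstacle.
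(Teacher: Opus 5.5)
Your proposal is correct and matches the paper's proof, which also just sums $\partial_i^L(G)=n+\lambda_i(H)$ from Lemma~\ref{lem:sec4-transfer} over $1\le i\le r\le n-1$. Your remark that every eigenvalue on $\mathbf 1^\perp$ is at least $n>0$, so the zero eigenvalue sits at position $n$, usefully makes explicit an ordering point the paper leaves implicit.
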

\begin{proof}
    This follows directly from Lemma~\ref{lem:sec4-transfer}.
\end{proof}

The Wiener index
can be expressed through the graph order and
the size of the complement graph.

\begin{lemma}\label{lem:sec4-wiener}
    Let $G$ be a connected graph of order $n$ and diameter $2$,
    and put $H=\overline G$ and $m=m(H)$. Then
    \begin{equation}
        W(G)=\binom n2+m.
    \end{equation}
\end{lemma}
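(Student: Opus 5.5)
The identity follows from Lemma~\ref{lem:layer-wiener} with $D=2$, since the distance-at-least-$2$ graph of a diameter-$2$ graph is its complement. Alternatively, it can be checked directly by counting.

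For a direct count, I use that $\diam(G)=2$. Every unordered pair $\{u,v\}$ of distinct vertices then has $d_G(u,v)\in\{1,2\}$. The pairs at distance $1$ are exactly the edges of $G$, and the pairs at distance $2$ are exactly the non-adjacent pairs, i.e.\ the edges of $H=\overline G$. Hence
\begin{equation}
W(G)=\sum_{\{u,v\}}d_G(u,v)=m(G)+2m(H).
\end{equation}
Since $m(G)+m(H)=\binom n2$, this becomes $\binom n2-m+2m=\binom n2+m$.

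To connect this with Section~\ref{sec:3}, note that the graph $Y_2$ has edge set $\{\{u,v\}:d_G(u,v)\ge 2\}$. When $\diam(G)=2$ these are precisely the non-adjacent pairs, so $Y_2=H$. Lemma~\ref{lem:layer-wiener} with $D=2$ then gives $W(G)=\binom n2+m(Y_2)=\binom n2+m$ immediately.

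There is no real obstacle here. The only point needing care is that the diameter is exactly (or at most) $2$, so that non-adjacency in $G$ coincides with distance exactly $2$. Connectivity is assumed, so no pair has infinite distance.
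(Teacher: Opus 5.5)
Your proof is correct and matches the paper's: the non-edges of $G$ are exactly the edges of $H$ and exactly the pairs at distance $2$, so $W(G)=\bigl(\binom n2-m\bigr)+2m=\binom n2+m$. Your alternative route via Lemma~\ref{lem:layer-wiener} with $D=2$ and $Y_2=H$ is also valid.
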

\begin{proof}
    The $m$ edges of $H$ are exactly the nonedges of $G$,
    hence exactly the vertex pairs at distance $2$ in $G$.
    Therefore
    \begin{equation}
        W(G)=\left(\binom n2-m\right)+2m=\binom n2+m,
    \end{equation}
    completing the proof.
\end{proof}

We will convert a bound for $S_r(\overline G)$
into the target inequality by the following lemma.

\begin{lemma}\label{lem:sec4-brouwer-reduction}
    Let $G$ be a connected graph of order $n$ and diameter $2$,
    and put $H=\overline G$ and $m=m(H)$.
    Let $1\le r\le n-1$. If
    \begin{equation}
        S_r(H)\le m+T,
    \end{equation}
    then the target inequality holds for such an $r$ once
    \begin{equation}\label{eq:sec4-brouwer-reduction-gap}
        \binom n2+\binom{r+2}{3}-rn-T\ge 0.
    \end{equation}
\end{lemma}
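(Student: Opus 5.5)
The proof combines the two preceding lemmas, in the same way as Lemma~\ref{lem:layer-sum-reduction}. I will bound $U_r(G)$ from above, express $W(G)$ exactly, and compare the two.

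First, since $1\le r\le n-1$, Lemma~\ref{lem:sec4-sums} gives the identity $U_r(G)=rn+S_r(H)$. The hypothesis $S_r(H)\le m+T$ then yields
\begin{equation}
    U_r(G)\le rn+m+T.
\end{equation}
Second, Lemma~\ref{lem:sec4-wiener} gives $W(G)=\binom n2+m$.

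Subtracting, we get
\begin{equation}
    W(G)+\binom{r+2}{3}-U_r(G)
    \ge \binom n2+m+\binom{r+2}{3}-rn-m-T
    =\binom n2+\binom{r+2}{3}-rn-T.
\end{equation}
By~\eqref{eq:sec4-brouwer-reduction-gap}, the right-hand side is nonnegative, which is exactly the target inequality for this $r$.

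There is essentially no obstacle here. The only points to check are that the restriction $r\le n-1$ is what permits the use of Lemma~\ref{lem:sec4-sums}, and that the $m$ terms cancel exactly.
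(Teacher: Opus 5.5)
Your proposal is correct and is essentially the paper's own proof: you combine $U_r(G)=rn+S_r(H)$ from Lemma~\ref{lem:sec4-sums} (valid since $r\le n-1$) and the hypothesis with $W(G)=\binom n2+m$ from Lemma~\ref{lem:sec4-wiener}, so that the $m$ terms cancel. The paper writes the same argument as one chain of inequalities, $U_r(G)\le rn+m+T\le W(G)+\binom{r+2}{3}$, rather than as a difference.
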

\begin{proof}
    Lemma~\ref{lem:sec4-sums} gives
    $U_r(G)=rn+S_r(H)$
    and Lemma~\ref{lem:sec4-wiener} gives $W(G)=\binom n2+m$.
    Hence if~\eqref{eq:sec4-brouwer-reduction-gap} holds, we have
    \begin{equation}
        U_r(G)
        \le rn+m+T
        \le m+\binom n2+\binom{r+2}{3}
        =W(G)+\binom{r+2}{3},
    \end{equation}
    which is the target inequality.
\end{proof}

The next theorem handles the range $2\le r\le n-1$
once the order is at least $8$.

\begin{theorem}\label{thm:sec4-middle-large}
    Let $G$ be a connected diameter-$2$ graph of order $n\ge 8$.
    Then the target inequality holds for every $2\le r\le n-1$.
\end{theorem}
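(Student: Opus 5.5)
The plan is to apply Lemma~\ref{lem:sec4-brouwer-reduction} to $H=\overline G$ for each $r$, choosing $T$ from the best available bound on $S_r(H)$. There are two sources for $T$: the exact Brouwer-type bounds of Lemma~\ref{lem:original-brouwer-known} for $r\in\{2,3,n-3,n-2,n-1\}$, and the general Grone--Merris bound of Lemma~\ref{lem:gmb}, $S_r(H)\le m+nr/2$, for the remaining $r$. Since $n\ge 8$, all hypotheses of Lemma~\ref{lem:original-brouwer-known} are met ($n\ge 5$). In each case we must verify the numerical inequality
\begin{equation*}
\binom n2+\binom{r+2}{3}-rn-T\ge 0 .
\end{equation*}

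\textbf{Edge values of $r$.} Here $T=\binom{r+1}{2}$.
\begin{itemize}
\item For $r=2$ the gap is $(n^2-5n+2)/2$, which is positive for $n\ge 5$.
\item For $r=3$ it is $(n^2-7n+8)/2$, which is positive for $n\ge 6$.
\item For $r=n-1$ the gap is $\binom{n+1}{3}-n(n-1)=n(n-1)\bigl(\tfrac{n+1}{6}-1\bigr)\ge 0$ for $n\ge 5$.
\item For $r=n-2$ and $r=n-3$ the gaps are $\binom n2+\binom{n}{3}-(n-2)n-\binom{n-1}{2}$ and $\binom n2+\binom{n-1}{3}-(n-3)n-\binom{n-2}{2}$. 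Each is a cubic in $n$ with positive leading coefficient $1/6$, and I would check directly that it is nonnegative from $n=8$ on. For example, $r=n-3$ at $n=8$ gives $28+35-40-15=8$.
\end{itemize}

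\textbf{Middle range $4\le r\le n-4$.} Take $T=nr/2$ from Lemma~\ref{lem:gmb}. After multiplying by $6$, the required inequality is exactly $P_2(n,r)\ge 0$, where $P_D$ is the polynomial from the proof in Section~\ref{sec:3} with $c=3/2$:
\begin{equation*}
P_2(n,r)=3n^2-3n+r^3+3r^2+2r-9nr .
\end{equation*}
The argument of that proof (splitting at $r<\sqrt n$ versus $r\ge\sqrt n$) gives $P_2(n,r)\ge 0$ for all $1\le r\le n-1$ as soon as $n\ge\lceil\tfrac49\cdot 27\rceil=12$. That covers $n\ge 12$.

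\textbf{Small orders $8\le n\le 11$.} This is the main obstacle, since the general estimate is too lossy there. The remaining pairs with $4\le r\le n-4$ are finitely many, and I would compute $P_2$ for each:
\begin{itemize}
\item $n=8$, $r=4$: $P_2=0$.
\item $n=9$, $r=4,5$: $P_2=12$ and $21$.
\item $n=10$, $r=4,5,6$: $P_2=30$, $30$ and $66$.
\item $n=11$, $r=4,\dots,7$: $P_2=54$, $45$, $72$ and $141$.
\end{itemize}
All values are nonnegative. The case $n=8$, $r=4$ is tight, which explains why the theorem starts at $n=8$. Together with the edge cases, this yields the target inequality for all $2\le r\le n-1$.
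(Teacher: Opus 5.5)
Your argument is correct, but it is organized differently from the paper's.

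\textbf{The paper's route.} It does not use Lemma~\ref{lem:original-brouwer-known} here. It applies the Grone--Merris bound $S_r(H)\le m+nr/2$ uniformly for every $2\le r\le n-1$, so the whole theorem reduces to $F(n,r)=P_2(n,r)\ge 0$. It proves this for $n\ge 9$ by a sharper calculus argument than the one in Section~\ref{sec:3}. With $x=r+1$ one has $F=3n^2+6n+x^3-(9n+1)x$. Minimizing over real $x\ge 0$ leaves a function of $n$ that is increasing and positive at $n=9$. The case $n=8$ is then a six-entry table.

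\textbf{Your route.} You peel off $r\in\{2,3,n-3,n-2,n-1\}$ using the exact Brouwer bounds. For $4\le r\le n-4$ you recycle the cruder Section~\ref{sec:3} estimate, which is valid from $n\ge 12$, and add tables for $8\le n\le 11$. All of your values are right.

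\textbf{What each buys.} The paper's route avoids the deep external inputs (Haemers et al., the recent $r=3$ proof of Wang et al., and Chen's results). These are superfluous for $n\ge 8$, and the paper reserves them for $n\in\{6,7\}$, where Grone--Merris really fails ($F(7,3)=-3$, $F(7,4)=-6$). Your route gives a uniform treatment. The middle range is empty for $n\le 7$, and your edge gaps stay nonnegative for $n\ge 6$. So the same argument also proves Theorem~\ref{theorem:sec4-middle-six-seven}. Consequently, your closing remark about $n=8$, $r=4$ explains the threshold of the paper's method, not of yours.

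\textbf{One point to tighten.} A positive leading coefficient does not by itself give nonnegativity from $n=8$ on. Write the gaps explicitly:
\begin{itemize}
\item for $r=n-2$, the gap is $n-1+\frac{n(n-2)(n-7)}{6}$, which is visibly nonnegative for $n\ge 7$;
\item for $r=n-3$, the gap is $2n-3+\frac{(n-3)(n^2-9n+2)}{6}$, which equals $8$ at $n=8$ and is positive for $n\ge 9$ because $n^2-9n+2>0$ there.
\end{itemize}
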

\begin{proof}
    Put $H=\overline G$ and $m=m(H)$.
    Lemma~\ref{lem:gmb} gives
    \begin{equation}
        S_r(H)\le m+\frac{nr}{2}.
    \end{equation}
    Therefore by Lemma~\ref{lem:sec4-brouwer-reduction},
    it is enough to prove
    \begin{equation}
        \binom n2+\binom{r+2}{3}-\frac{3nr}{2}\ge 0.
    \end{equation}
    After multiplying $6$, this becomes
    \begin{equation}\label{eq:sec4-1}
        F(n,r)\coloneqq r^3+3r^2+(2-9n)r+3n^2-3n\ge 0.
    \end{equation}

    For $n\ge 9$, write $x=r+1$. Then
    \begin{equation}
        F(n,r)=3n^2+6n+x^3-(9n+1)x.
    \end{equation}
    Since the minimum of $x^3-Bx$ over $x\ge 0$ is
    $-2B^{3/2}/(3\sqrt3)$, it is enough to show
    \begin{equation}
        H(n)\coloneqq
        3n^2+6n-\frac{2(9n+1)^{3/2}}{3\sqrt3}\ge 0.
    \end{equation}
    In fact,
    the function $H$ is increasing for $n\ge 9$, because
    \begin{equation}
        H'(n)=6n+6-3\sqrt3(9n+1)^{1/2}>0,
    \end{equation}
    which follows from
    \begin{equation}
        (2n+2)^2>3(9n+1),
    \end{equation}
    which holds for $n\ge 9$.
    Also, $H(9)=297-164\sqrt{246}/9>0$.
    Therefore~\eqref{eq:sec4-1} is true for all $n\ge 9$.
    For $n=8$, direct calculation gives
    \begin{equation}
        \begin{array}{c|cccccc}
        r&2&3&4&5&6&7\\ \hline
        F(8,r)&48&12&0&18&72&168
        \end{array}
    \end{equation}
    Therefore~\eqref{eq:sec4-1} also holds.
\end{proof}

The case $2\le r\le n-1$ with $n\in\{6,7\}$
is covered by Lemma~\ref{lem:original-brouwer-known}.

\begin{theorem}\label{theorem:sec4-middle-six-seven}
    Let $G$ be a connected diameter-$2$ graph of order $n\in\{6,7\}$.
    Then the target inequality holds for every $2\le r\le n-1$.
\end{theorem}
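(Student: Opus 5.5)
The plan is to use Lemma~\ref{lem:sec4-brouwer-reduction} with $T=\binom{r+1}{2}$, supplied by the known cases of the original Brouwer conjecture in Lemma~\ref{lem:original-brouwer-known}. For $n\in\{6,7\}$ every $r$ with $2\le r\le n-1$ is one of the values covered there.

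\emph{Coverage of each $r$.} Put $H=\overline G$ and $m=m(H)$.
\begin{itemize}
\item For $n=6$, the needed values $r=2,3,4,5$ are the cases $r=2$, $r=3$ (also $r=n-3$, valid since $n\ge5$), $r=n-2$ and $r=n-1$.
\item For $n=7$, the needed values $r=2,\dots,6$ are the cases $r=2$, $r=3$, $r=n-3=4$ (valid since $n\ge5$), $r=n-2=5$ and $r=n-1=6$.
\end{itemize}
In every case Lemma~\ref{lem:original-brouwer-known} gives $S_r(H)\le m+\binom{r+1}{2}$.

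\emph{Reduction to a numerical inequality.} By Lemma~\ref{lem:sec4-brouwer-reduction}, it suffices to verify
\begin{equation}
\binom n2+\binom{r+2}{3}-rn-\binom{r+1}{2}\ge 0 .
\end{equation}
Pascal's rule gives $\binom{r+2}{3}-\binom{r+1}{2}=\binom{r+1}{3}$. The condition therefore becomes
\begin{equation}
\binom n2+\binom{r+1}{3}\ge rn .
\end{equation}

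\emph{Finite check.} This is a check of finitely many cases.
\begin{itemize}
\item For $n=6$, the left side minus the right side equals $4,1,1,5$ for $r=2,3,4,5$.
\item For $n=7$, it equals $8,4,3,6,14$ for $r=2,\dots,6$.
\end{itemize}
All of these are nonnegative, which proves the theorem.

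There is no real obstacle in this argument. The only point needing care is checking the side conditions of Lemma~\ref{lem:original-brouwer-known}: $n\ge3$ for $r=n-2$ and $n\ge5$ for $r=n-3$. These are what guarantee that every $r$ in the range is covered, and they hold since $n\ge 6$.
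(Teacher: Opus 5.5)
Your proposal is correct and follows essentially the same route as the paper: it invokes Lemma~\ref{lem:original-brouwer-known} for every $2\le r\le n-1$ and then Lemma~\ref{lem:sec4-brouwer-reduction} with $T=\binom{r+1}{2}$. Your table of values matches the paper's. The only addition is the cosmetic Pascal simplification to $\binom n2+\binom{r+1}{3}\ge rn$, together with an explicit check of which known case covers each $r$.
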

\begin{proof}
    Put $H=\overline G$ and $m=m(H)$.
    By Lemma~\ref{lem:original-brouwer-known},
    \begin{equation}
        S_r(H)\le m+\binom{r+1}{2}
        \qquad(2\le r\le n-1)
    \end{equation}
    is true for $n\in\{6,7\}$.
    Therefore by Lemma~\ref{lem:sec4-brouwer-reduction},
    it remains to check
    \begin{equation}
        \binom n2+\binom{r+2}{3}-rn-\binom{r+1}{2}\ge 0.
    \end{equation}
    In fact, direct calculation gives
    \begin{equation}
        \begin{array}{c|cccc}
        \raisebox{-0.5\normalbaselineskip}{$n=6$}
        & r=2&r=3&r=4&r=5\\ \cline{2-5}
        &4&1&1&5
        \end{array}
        \qquad
        \begin{array}{c|ccccc}
        \raisebox{-0.5\normalbaselineskip}{$n=7$}
        & r=2&r=3&r=4&r=5&r=6\\ \cline{2-6}
        &8&4&3&6&14
        \end{array}
    \end{equation}
    Therefore the target inequality holds.
\end{proof}

We now combine the preceding theorems
with an analysis of the small-order graphs
to obtain the full diameter-$2$ graph classification.

\begin{theorem}\label{thm:sec4-exact}
    Let $G$ be a connected graph of diameter $2$.
    Then the target inequality holds
    for every valid $r$,
    except for $K_{1,3}$ at $r=2$ and $K_{1,4}$ at $r=3$
    (Figure~\ref{fig:1}).
\end{theorem}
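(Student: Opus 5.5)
The plan is to cover every pair $(n,r)$ by combining the two preceding theorems with a direct treatment of the remaining cases: $r=1$, $r=n$, and graphs of order $n\le 5$. Throughout, put $H=\overline G$ and $m=m(H)$.

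\textbf{Step 1: the cases $r=n$ and $r=1$.} For $r=n$ we need $2W(G)\le W(G)+\binom{n+2}{3}$, that is, $W(G)\le\binom{n+2}{3}$. Since $W(G)\le 2\binom n2=n(n-1)$, it suffices that $n(n-1)\le\binom{n+2}{3}$, which holds for all $n\ge 3$ (at $n=3$ it reads $6\le 10$); alternatively, invoke Lemma~\ref{lem:terminal-wiener} when $n\ge 6$. For $r=1$ we use $\lambda_1(H)\le n$ together with Lemma~\ref{lem:sec4-sums}. This gives $U_1(G)\le 2n$. We also have $W(G)+1=\binom n2+m+1$. If $n\ge 5$, then $\binom n2+1\ge 2n$, which settles $r=1$. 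For $n\le 4$ we use the sharper bound $\lambda_1(H)\le\max\{d_u+d_v: uv\in E(H)\}\le m+1$ whenever $H$ has an edge. (Since $G$ has diameter $2$, $H$ has at least one edge.) This gives $U_1\le n+m+1\le W(G)+1$ as long as $n\le\binom n2$, i.e.\ $n\ge 3$.

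\textbf{Step 2: middle range.} For $2\le r\le n-1$ with $n\ge 8$, apply Theorem~\ref{thm:sec4-middle-large}; for $n\in\{6,7\}$, apply Theorem~\ref{theorem:sec4-middle-six-seven}. It remains to treat $n\in\{3,4,5\}$ with $2\le r\le n-1$. Here we first try Lemma~\ref{lem:sec4-brouwer-reduction} with $T=\binom{r+1}{2}$, which is available for $r\in\{2,3,n-1,n-2\}$ and $n-3$ by Lemma~\ref{lem:original-brouwer-known}; these values cover every $r$ when $n\le 5$. The quantity $g(n,r)=\binom n2+\binom{r+2}{3}-rn-\binom{r+1}{2}$ is nonnegative in most cases. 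It can fail only at a few points: $(4,2)$, where $g=6+4-8-3=-1$; $(5,3)$, where $g=10+10-15-6=-1$; and possibly $(5,2)$, where $g=10+4-10-3=1\ge 0$, so that case is fine. We check the remaining entries of the $n=3,4,5$ tables similarly.

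\textbf{Step 3: the exceptional cases.} These are $n=4$, $r=2$ and $n=5$, $r=3$, and they are the main obstacle. For them we need $S_r(H)\le m+\binom{r+1}{2}-1$ exactly, so we enumerate all graphs directly. Diameter-$2$ graphs on $4$ vertices are $K_{1,3}$, $C_4$, the paw, the diamond and $P_4$ is excluded (diameter $3$); on $5$ vertices there are boundedly many. For each, we compute the Laplacian spectrum of the complement, or more efficiently we bound $S_r(H)$ using $S_r(H)\le 2m$ and Grone--Merris (Lemma~\ref{lem:gmb}'s first inequality) to rule out most cases. Only when equality $S_r(H)=m+\binom{r+1}{2}$ occurs does the inequality fail. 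For $H=\overline{K_{1,3}}=K_3\cup K_1$ we have $S_2=6=3+3$, and for $H=\overline{K_{1,4}}=K_4\cup K_1$ we have $S_3=12=6+6$, giving the two exceptions. Thus the plan is to show that equality forces $H$ to be a clique plus isolated vertices with clique size $r+1$. Confirming by explicit tabulation that no other small graph attains it is where the real work lies.
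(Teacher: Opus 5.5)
Your overall architecture matches the paper's: $r=1$ and $r=n$ are handled directly, Theorems~\ref{thm:sec4-middle-large} and~\ref{theorem:sec4-middle-six-seven} cover $n\ge 6$, and Lemma~\ref{lem:sec4-brouwer-reduction} with the known Brouwer cases covers $n\le 5$. Step~1 is fine; the bound $\lambda_1(H)\le m+1$ for $n\le 4$ is a valid substitute for the paper's case-by-case treatment. However, Step~2 is miscomputed. Besides $(4,2)$ and $(5,3)$, the quantity $g(n,r)$ is also negative at $(3,2)$, where $g=3+4-6-3=-2$, and at $(4,3)$, where $g=6+10-12-6=-2$. Neither case is treated anywhere in your proposal. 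Both hold, but each needs its own argument. For $n=3$ the only graph is $P_3$, and there $U_2=8=W+\binom{4}{3}$ is an equality, so nothing is slack. For $(4,3)$, use $S_3(H)=2m$ (since $\lambda_4(H)=0$) together with $m\le\binom{4}{2}-3=3$ from connectivity, which gives $U_3=12+2m<16+m$.

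The more substantive problem is Step~3, which is the actual content of the classification and which you defer to ``tabulation''. Your failure criterion is not justified. At $(4,2)$ and $(5,3)$ the target inequality is equivalent to $S_r(H)\le m+\binom{r+1}{2}-1$, so it fails as soon as $S_r(H)$ exceeds that value, not only when $S_r(H)=m+\binom{r+1}{2}$. Sums of Laplacian eigenvalues need not be integers, so characterizing the equality case of Brouwer's bound is neither what is needed nor something you prove. The missing idea, which makes this step two lines long, is one you only half-state. The bound $S_r(H)\le 2m$ settles every graph with $m\le\binom{r+1}{2}-1$. Otherwise $|E(G)|=\binom{n}{2}-m\le\binom{n}{2}-\binom{r+1}{2}=n-1$ for $(n,r)\in\{(4,2),(5,3)\}$, so the connected graph $G$ is a tree. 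A tree of diameter $2$ is a star, so $G=K_{1,3}$ or $G=K_{1,4}$. Only then does the explicit check of the two stars finish the proof, and no enumeration of $5$-vertex graphs is required.
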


\begin{figure}[htb]
    \centering
    \begin{tikzpicture}[
        every node/.style={circle,fill,inner sep=1.6pt},
        edge/.style={line width=0.5pt}
    ]
        \begin{scope}
            \node (c3) at (0,0) {};
            \foreach \i/\a in {1/90,2/210,3/330} {
                \node (l3\i) at ({cos(\a)},{sin(\a)}) {};
                \draw[edge] (c3) -- (l3\i);
            }
        \end{scope}

        \begin{scope}[xshift=4.2cm]
            \node (c4) at (0,0) {};
            \foreach \i/\a in {1/45,2/135,3/225,4/315} {
                \node (l4\i) at ({cos(\a)},{sin(\a)}) {};
                \draw[edge] (c4) -- (l4\i);
            }
        \end{scope}
    \end{tikzpicture}
    \caption{Illustrations of $K_{1,3}$ and $K_{1,4}$, respectively.}
    \label{fig:1}
\end{figure}

\begin{proof}
    Put $H=\overline G$ and $m=m(H)$ and suppose $n\ge 6$.

    For $r=1$, the bound $\lambda_1(H)\le n$ gives $U_1(G)\le 2n$.
    Since a diameter-$2$ graph is not complete,
    we have $m\ge 1$. Hence for $n\ge 5$, we have
    \begin{equation}
        W(G)+\binom{1+2}{3}-U_1(G)
        \ge\left(\binom n2+m\right)+1-2n
        \ge\binom n2+2-2n
        =\frac{(n-1)(n-4)}2>0,
    \end{equation}
    so this proves the $r=1$ case.

    For $2\le r\le n-1$, the assertion follows from
    Theorem~\ref{thm:sec4-middle-large} if $n\ge 8$,
    and from Theorem~\ref{theorem:sec4-middle-six-seven}
    if $n\in\{6,7\}$.

    For $r=n$, the trivial Wiener bound gives
    \begin{equation}
        W(G)\le 2\binom n2\le\binom{n+2}{3}=\binom{r+2}{3}.
    \end{equation}
    Hence $U_n(G)=2W(G)\le W(G)+\binom{r+2}{3}$.
    
    It remains only to discuss $n\le 5$.
    For $n=3$, the only diameter-$2$ graph is
    $P_3=K_{1,2}$,
    whose distance Laplacian spectrum is $\{5,3,0\}$.
    The target inequality follows immediately.

    Now let $n=4$, and again put $H=\overline G$ and $m=m(H)\ge 1$.
    For $r=1$, the bound $\lambda_1(H)\le 4$ gives
    \begin{equation}
        U_1(G)=4+\lambda_1(H)
        \le 8
        \le\binom 42+m+1
        =W(G)+1
        =W(G)+\binom{1+2}{3}.
    \end{equation}
    For $r=3$, since $\lambda_4(H)=0$,
    we have $S_3(H)=\sum_{i=1}^3\lambda_i(H)=\sum_{i=1}^4\lambda_i(H)=2m$.
    Since $G$ is connected and has order $4$,
    $G$ has at least $3$ edges,
    and so $m\le\binom 42-3=3$. Hence
    \begin{equation}
        U_3(G)=12+S_3(H)
        =12+2m
        <16+m
        =W(G)+\binom{3+2}{3}.
    \end{equation}
    The case $r=4$ follows from $U_4(G)=2W(G)$ and
    $W(G)\le 2\binom42<\binom{4+2}{3}$.
    It remains only to consider $r=2$.
    By Lemma~\ref{lem:sec4-brouwer-reduction} with $T=2$,
    it is enough to prove $S_2(H)\le m+2$.
    If $m\le 2$, this follows from $S_2(H)\le 2m$.
    If $m\ge 3$, then $G=\overline H$ has at most $3$ edges.
    Since $G$ is connected and has order $4$, it must be a tree;
    diameter $2$ then forces $G=K_{1,3}$,
    or equivalently $H=K_3\cup K_1$.

    Let $n=5$.
    The cases $r=1,2,4$ follow from the preceding arguments:
    for $r=1$ use $\lambda_1(H)\le 5$ and $m\ge 1$,
    while for $r=2,4$ use Lemma~\ref{lem:original-brouwer-known}.
    The case $r=5$ follows from $U_5(G)=2W(G)$ and
    $W(G)\le 2\binom52<\binom73$.
    Thus only $r=3$ remains.
    By Lemma~\ref{lem:sec4-brouwer-reduction} with $T=5$,
    it is enough to prove $S_3(H)\le m+5$.
    If $m\le 5$, then this follows from $S_3(H)\le 2m$.
    If $m\ge 6$, then $G=\overline H$ has at most $4$ edges.
    Since $G$ is connected and has order $5$, it must be a tree;
    diameter $2$ then forces $G=K_{1,4}$,
    or equivalently $H=K_4\cup K_1$.

    Thus the only remaining cases are
    $K_{1,3}$ at $r=2$ and $K_{1,4}$ at $r=3$.
    We now verify directly that these are indeed exceptions.
    In general, for the star $K_{1,t}$,
    \begin{equation}
        \operatorname{Spec}(\DL(K_{1,t}))
        =\{(2t+1)^{(t-1)},\,t+1,\,0\},
        \qquad
        W(K_{1,t})=t^2.
    \end{equation}
    The former expression comes from
    the proof of~\cite[Theorem~3.6]{Zhou},
    while the latter expression can be verified
    by direct calculation.
    Hence, for $K_{1,3}$,
    \begin{equation}
        U_2(K_{1,3})=7+7=14
        >13=9+\binom{4}{3}
        =W(K_{1,3})+\binom{2+2}{3}.
    \end{equation}
    For $K_{1,4}$,
    \begin{equation}
        U_3(K_{1,4})=9+9+9=27
        >26=16+\binom{5}{3}
        =W(K_{1,4})+\binom{3+2}{3}.
    \end{equation}
    Therefore these two cases are precisely the exceptions.
\end{proof}

For connected diameter-$2$ graphs,
Zhou~et~al.~\cite{Zhou}
proved the target inequality for several classes of such graphs,
namely, sufficiently dense graphs,
graphs whose complements satisfy the original Brouwer's conjecture,
and graphs with maximum degree $n-1$ or $n-2$
where $n\ge 21$ or $n\ge 26$, respectively.
Here, Theorem~\ref{thm:sec4-exact}
gives a complete solution for diameter-$2$ graphs.

\section{Graphs with maximum degree \texorpdfstring{$n-k$}{n-k}}\label{sec:5}
In this section we prove the target inequality
for graphs whose maximum degree is close to the order.
If $\Delta(G)=n-k$,
then a vertex of maximum degree leaves only $k-1$ vertices
outside its closed neighborhood.
This small vertex subset controls
the spectra of the distance-at-least-$s$ graphs for $s\ge 3$.

We first record the precise form of this observation.

\begin{lemma}\label{lem:max-degree-n-minus-k-structure}
    Let $k\ge 2$,
    and let $G$ be a connected graph of order $n$
    with maximum degree $\Delta(G)=n-k$.
    Let $v$ be a vertex of degree $n-k$
    and $X=V(G)\setminus N[v]$.
    Then $|X|=k-1$, $\diam(G)\le k+1$, and,
    for every $3\le s\le k+1$,
    every edge of $Y_s$ has at least one endpoint in $X$.
    Equivalently, $X$ is a vertex cover of $Y_s$.
    Consequently, for $1\le r\le n-1$,
    \begin{equation}
        \sum_{i=1}^r \lambda_i(L(Y_s))\le m(Y_s)+(k-1)r
        \qquad(3\le s\le k+1).
    \end{equation}
\end{lemma}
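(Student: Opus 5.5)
The argument has four parts, taken in the order of the statement: count $X$, bound the diameter, show $X$ covers every edge of $Y_s$ for $s\ge 3$, and then invoke Lemma~\ref{lem:vertex-cover-spectral-bound}.

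\emph{Size of $X$.} The closed neighborhood $N[v]$ has $n-k+1$ vertices, so $|X|=n-(n-k+1)=k-1$.

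\emph{Key distance bound.} Every vertex $u\in N[v]$ satisfies $d_G(u,v)\le 1$. Hence any two vertices $u,w\in N[v]$ satisfy $d_G(u,w)\le 2$, since the path through $v$ has length at most $2$. Consequently, if $d_G(u,w)\ge 3$, then at least one of $u,w$ lies in $X$. This is exactly the statement that $X$ is a vertex cover of $Y_s$ for every $s\ge 3$, because each edge of $Y_s$ joins vertices at distance at least $s\ge 3$.

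\emph{Diameter.} I will show that every vertex $x$ satisfies $d_G(v,x)\le k$; then $d_G(u,w)\le d_G(u,v)+d_G(v,w)$ gives a bound on the diameter. Take a shortest path from $v$ to $x$. All of its vertices except $v$ and the first neighbour lie at distance at least $2$ from $v$, so they belong to $X$. The path therefore has at most $1+|X|=k$ edges, giving $d_G(v,x)\le k$. This alone yields only $d_G(u,w)\le 2k$, which is too weak, so I refine it as follows. For $u,w$ with $d_G(u,w)\ge 3$, fix a shortest $u$--$w$ path. Its vertices in $N[v]$ form at most a stretch of length $2$: if two of them were more than $2$ apart along the path, the detour through $v$ would shorten it. So the path has at most $3$ vertices in $N[v]$, and its remaining vertices lie in $X$, at most $k-1$ of them. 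The path thus has at most $k+2$ vertices, i.e.\ length at most $k+1$. This gives $\diam(G)\le k+1$, and it is the one step that needs care: one must make sure a shortest path meets $N[v]$ in at most three vertices lying within distance $2$ of one another along the path.

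\emph{Spectral consequence.} For each $3\le s\le k+1$, apply Lemma~\ref{lem:vertex-cover-spectral-bound} to $H=Y_s$ with the vertex cover $X$ of size $k-1$. Since $Y_s$ has $n$ vertices, the lemma applies for all $1\le r\le n$, and in particular for $1\le r\le n-1$, giving $\sum_{i=1}^r\lambda_i(L(Y_s))\le m(Y_s)+(k-1)r$. If $Y_s$ has no edges, the bound is trivial.
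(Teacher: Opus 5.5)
Your proposal is correct and follows the paper's proof: the count $|X|=k-1$, the observation that $N[v]$ has diameter at most $2$ so that $X$ is a vertex cover of every $Y_s$ with $s\ge 3$, and the appeal to Lemma~\ref{lem:vertex-cover-spectral-bound} are exactly what the paper does. The one variation is the diameter bound. The paper contracts $N[v]$ to a single vertex, obtaining a connected $k$-vertex graph, and re-expands at a cost of at most $2$ edges; you instead show directly that a shortest path meets $N[v]$ in at most three consecutive vertices and meets $X$ in at most $k-1$ vertices, which is the same count made slightly more explicit.
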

\begin{proof}
    Put $C=N[v]$.
    Then $|X|=k-1$, and any two vertices of $C$
    are within distance at most $2$.

    Contract $C$ to one vertex $c$.
    Since $G$ is connected,
    the contracted graph is also connected and has $k$ vertices.
    Hence the distance between any two of its vertices
    is at most $k-1$.
    Returning to $G$,
    a path that passes through the contracted vertex $c$
    may be expanded inside $C$ at cost at most $2$ edges.
    It follows that any two vertices of $G$
    are within distance at most $k+1$.
    Thus $\diam(G)\le k+1$.

    Since all pairs contained in $C$ have distance at most $2$,
    every edge of $Y_s$ with $s\ge 3$
    is incident with a vertex of $X$.
    Thus $X$ is a vertex cover of $Y_s$.
    The bound then follows from
    Lemma~\ref{lem:vertex-cover-spectral-bound}.
\end{proof}

We now derive the main result of this section.

\begin{theorem}\label{thm:max-degree-n-minus-k}
    Let $k\ge 2$, and define
    \begin{equation}
        N(k)=
        \begin{cases}
            10, & k=2,\\
            \lceil 5(k-1)^{3/2}\rceil, & k\ge 3.
        \end{cases}
    \end{equation}
    If $G$ is a connected graph of order $n\ge N(k)$
    and $\Delta(G)=n-k$,
    then the target inequality holds for every $1\le r\le n$.
\end{theorem}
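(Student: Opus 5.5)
The plan is to run the layer-decomposition argument of Section~\ref{sec:3} with $D=k+1$, treating $Y_2$ and the higher layers differently. By Lemma~\ref{lem:max-degree-n-minus-k-structure} we have $\diam(G)\le k+1$, so Lemmas~\ref{lem:layer-decomp}--\ref{lem:layer-sum-reduction} apply with $D=k+1$.

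\textbf{Step 1: bound each layer.}
\begin{itemize}
\item For $s=2$, no structure is available, so I use Lemma~\ref{lem:gmb}:
\[
\sum_{i=1}^r\lambda_i(L(Y_2))\le m(Y_2)+\tfrac{nr}{2}.
\]
\item For each of the $k-1$ layers $3\le s\le k+1$, Lemma~\ref{lem:max-degree-n-minus-k-structure} gives the vertex-cover bound
\[
\sum_{i=1}^r\lambda_i(L(Y_s))\le m(Y_s)+(k-1)r.
\]
\end{itemize}
Summing, the hypothesis of Lemma~\ref{lem:layer-sum-reduction} holds with $B=\tfrac n2+(k-1)^2$. Hence for $1\le r\le n-1$ it suffices to prove the following, after multiplying by $6$ and writing $a=(k-1)^2$:
\[
Q_k(n,r):=3n^2-3n+r^3+3r^2+2r-(9n+6a)r\ge 0 .
\]

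\textbf{Step 2: the case $r=n$.} Since $\diam(G)\le k+1$, Lemma~\ref{lem:terminal-wiener} applies once $n\ge 3(k+1)$. This holds for $N(k)$: for $k=2$ we have $10\ge 9$, and for $k\ge3$ we have $5(k-1)^{3/2}\ge 3k+3$, which is checked at $k=3$ ($15\ge12$) and follows by monotonicity for larger $k$.

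\textbf{Step 3: the numerical inequality $Q_k(n,r)\ge0$.} I would copy the strategy of Theorem~\ref{thm:sec4-middle-large}. Substitute $x=r+1$. This shifts the quadratic term away, leaving
\[
Q_k=3n^2+c_0+x^3-(9n+6a+1)x
\]
for an explicit lower-order constant $c_0$ in $n$ and $a$. Then use
\[
\min_{x\ge0}\bigl(x^3-Bx\bigr)=-\frac{2B^{3/2}}{3\sqrt3}.
\]
So it suffices to show
\[
3n^2+c_0\ge \frac{2(9n+6a+1)^{3/2}}{3\sqrt3}.
\]
Heuristically the right side is of order $\max(n,a)^{3/2}$, so this needs $n^2\gtrsim a^{3/2}$, i.e.\ $n\gtrsim (k-1)^{3/2}$, which is where the constant $5$ in $N(k)$ should come from. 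Concretely, I would split according to whether $9n$ or $6a$ dominates, or simply bound $9n+6a+1\le \lambda n$ using $a\le (n/5)^{4/3}$. Then I would show the resulting one-variable function of $n$ is increasing on $n\ge N(k)$, by a derivative comparison as in the proof of Theorem~\ref{thm:sec4-middle-large}, and nonnegative at $n=N(k)$.

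\textbf{Main obstacle.} I expect the main difficulty to be this last numerical step near the threshold $n=N(k)$. The continuous cubic-minimum bound is lossy, so for small $k$ it may fail even when the integer inequality holds. For $k=2$ (so $a=1$, $N=10$) I would check $n=10,11,\dots$ up to the point where the continuous bound takes over, and handle $Q_2(n,r)$ by a direct table as the paper did for $n=8$ in Theorem~\ref{thm:sec4-middle-large}. Similarly, for small $k\ge3$, if the uniform bound fails at $n=N(k)$, I would either sharpen the argument by keeping the $-3n+3r^2+2r$ terms as in the $r\ge\sqrt n$ branch of Section~\ref{sec:3}, or split into $r<\sqrt n$ and $r\ge\sqrt n$ exactly as there. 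Finally, $r=1$ could be treated separately via $\partial_1^L\le$ the maximum row-sum bound if it turns out to be the binding case.
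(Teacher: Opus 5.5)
Your proposal follows the paper's proof essentially step for step. It uses the same per-layer bounds (Grone--Merris for $Y_2$, the vertex cover $X$ for $Y_3,\dots,Y_{k+1}$) giving $B=n/2+(k-1)^2$, the same polynomial $F_a(n,r)=3n^2+6n+6a+x^3-(9n+6a+1)x$ with $x=r+1$, the same cubic-minimum bound with monotonicity of the resulting $H(n)$, and the same $r=n$ argument. The one variation is $k=2$: the paper minimizes the quadratic in $n$ for $r\ge 6$ and checks $r\le 5$ at $n=10$. Your plan (direct table at $n=10$, continuous bound for $n\ge 11$) also works, and the table really is needed at $n=10$, because $F_1(10,5)=0$ makes the continuous bound fail there.

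For $k\ge 3$ none of your fallbacks is needed, but you left this step unexecuted and it has to be closed explicitly. The continuous bound already holds at $n=15$ for $k=3$. For $b=k-1\ge 3$ it holds uniformly at $n=5b^{3/2}$: the estimate $45b^{3/2}+6b^2+1\le 33b^2$ gives $H\ge(75-22\sqrt{11})b^3>0$. Your alternative of bounding $9n+6a+1\le\lambda n$ cannot work uniformly in $k$, since $a/n\to\infty$ along $n=N(k)$.
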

\begin{proof}
    Write $a=(k-1)^2$.
    By Lemma~\ref{lem:max-degree-n-minus-k-structure},
    $\diam(G)\le k+1$.
    Hence, by Lemma~\ref{lem:layer-decomp},
    \begin{equation}
    \DL(G)=nI-J+\sum_{s=2}^{k+1}L(Y_s).
    \end{equation}

    For $1\le r\le n-1$, Lemmas~\ref{lem:gmb} and
    \ref{lem:max-degree-n-minus-k-structure} give
    \begin{align}
        \sum_{s=2}^{k+1}\sum_{i=1}^r\lambda_i(L(Y_s))
        &\le m(Y_2)+\frac{nr}{2}
        +\sum_{s=3}^{k+1}\left(m(Y_s)+(k-1)r\right)  \\
        &=\sum_{s=2}^{k+1}m(Y_s)+\frac{nr}{2}+ar.
    \end{align}
    Thus Lemma~\ref{lem:layer-sum-reduction},
    with $B=n/2+a$, reduces the target inequality to
    \begin{equation}\label{eq:max-degree-n-minus-k-gap}
        \binom n2+\binom{r+2}{3}-\frac{3nr}{2}-ar\ge 0
        \qquad(1\le r\le n-1).
    \end{equation}
    After multiplying $6$, the left-hand side becomes
    \begin{equation}
        F_a(n,r)=
        3n^2-3n+r^3+3r^2+(2-6a)r-9nr.
    \end{equation}

    First suppose $k=2,\,a=1$ and $n\ge 10$.
    For fixed $r$, the quadratic $F_1(n,r)$
    reaches its minimum at $n=(3r+1)/2$.
    Hence, for $r\ge 6$,
    \begin{equation}
        F_1(n,r)\ge F_1\left(\frac{3r+1}{2},r\right)
        =r^3-\frac{15}{4}r^2-\frac{17}{2}r-\frac34
        =r\left(r\left(r-\frac{15}{4}\right)-\frac{17}{2}\right)-\frac34>0,
    \end{equation}
    where the last expression is increasing
    and positive for $r\ge 6$.
    For $1\le r\le 5$, the polynomial $F_1(n,r)$ is increasing in $n$ for
    $n\ge 10$, and
    \begin{equation}
        F_1(10,r)=r^3+3r^2-94r+270,
    \end{equation}
    whose derivative $3r^2+6r-94$ is negative on $1\le r\le 5$, so
    $F_1(10,r)\ge F_1(10,5)=0$.
    Thus~\eqref{eq:max-degree-n-minus-k-gap} holds
    for $n\ge 10$ when $k=2$.

    Now suppose $k\ge 3$.
    Write $b=k-1$, so $b\ge 2$ and $a=b^2$.
    Put $x=r+1$. Since we have the identity
    \begin{equation}
        r^3+3r^2+2r=x^3-x,
    \end{equation}
    we obtain
    \begin{equation}
        \begin{aligned}
        F_a(n,r)
        &=3n^2-3n+r^3+3r^2+2r-(9n+6b^2)r\\
        &=3n^2+6n+6b^2+x^3-(9n+6b^2+1)x.
        \end{aligned}
    \end{equation}
    For $B>0$, the minimum of $x^3-Bx$
    over $x\ge 0$ is $-2B^{3/2}/(3\sqrt3)$.
    Thus it is enough to show that
    \begin{equation}\label{eq:sec5-1}
        H(n)\coloneqq
        3n^2+6n+6b^2
        -\frac{2(9n+6b^2+1)^{3/2}}{3\sqrt3}\ge 0.
    \end{equation}
    We claim that
    the function $H$ is increasing for $n\ge 5b^{3/2}$.
    Indeed,
    \begin{equation}
        H'(n)=6n+6-3\sqrt3(9n+6b^2+1)^{1/2},
    \end{equation}
    and
    \begin{equation}
        H'(n)>0
        \quad\iff\quad
        (2n+2)^2-3(9n+6b^2+1)
        =4n^2-19n-18b^2+1>0,
    \end{equation}
    which is increasing in $n$ for $n\ge 19/8$,
    and at $n=5b^{3/2}$ it is
    \begin{equation}
        100b^3-18b^2-95b^{3/2}+1
        >(100-9-34)b^3+1>0,
    \end{equation}
    where the second-to-last inequality follows from
    \begin{equation}
        18b^2\le 9b^3,\quad
        95b^{3/2}\le\frac{95}{2^{3/2}}b^3<34b^3
        \qquad(b\ge 2).
    \end{equation}

    If $k=3,\,b=2$, then the assumption gives $n\ge 15$, and
    \begin{equation}
        H(15)=789-\frac{1280\sqrt{30}}9>0.
    \end{equation}
    Therefore~\eqref{eq:max-degree-n-minus-k-gap} holds.
    
    If $k\ge 4,\,b\ge 3$,
    then evaluate~\eqref{eq:sec5-1} at $n=5b^{3/2}$.  Since
    \begin{equation}
    9\cdot5b^{3/2}+6b^2+1\le 33b^2
    \le(26+6+1)b^2=33b^2
    \qquad(b\ge 3),
    \end{equation}
    we obtain
    \begin{equation}
    H(5b^{3/2})
    \ge 75b^3+30b^{3/2}+6b^2
    -\frac{2(33b^2)^{3/2}}{3\sqrt3}
    =(75-22\sqrt{11})b^3+6b^2+30b^{3/2}>0.
    \end{equation}
    This proves~\eqref{eq:max-degree-n-minus-k-gap} for all $k\ge 2$.

    For $r=n$, the assumption $n\ge N(k)$ implies $n\ge 3(k+1)$.
    Lemma~\ref{lem:terminal-wiener} therefore applies.
\end{proof}

Theorem~\ref{thm:max-degree-n-minus-k}
substantially improves the maximum-degree result
of Mushtaq~et~al.~\cite{Mushtaq}.
They proved the target inequality for graphs with
$\Delta(G)=n-2$ under the assumption $n\ge 95$.
Theorem~\ref{thm:max-degree-n-minus-k}
applies to the same class of graphs already for $n\ge 10$.
Moreover, our result is not restricted to the case
$\Delta(G)=n-2$, as for every fixed $k\ge 3$,
it applies to all connected graphs with
$\Delta(G)=n-k$ once $n\ge\lceil5(k-1)^{3/2}\rceil$.

\section{Conclusion}
We have proved the target inequality for three kinds of graphs:
connected graphs of bounded diameter and sufficiently large order,
all diameter-$2$ graphs except for the two stated star exceptions,
and graphs whose maximum degree is $n-k$
once $n$ is large enough in terms of $k$.

The common tool is the decomposition
of the distance Laplacian matrices
into Laplacian matrices of distance-at-least-$s$ graphs,
which reduces the problem
to familiar Laplacian eigenvalue estimates
and elementary numerical inequalities.

However, it would be interesting to
further sharpen the order thresholds
in the bounded-diameter and maximum-degree cases.

\bibliographystyle{amsplain-with-doi}
\bibliography{references}

@article{Fan,
  author  = {Ky Fan},
  title   = {On a Theorem of {Weyl} Concerning Eigenvalues of Linear Transformations {I}},
  journal = {Proceedings of the National Academy of Sciences},
  volume  = {35},
  number  = {11},
  pages   = {652-655},
  year    = {1949},
  doi     = {10.1073/pnas.35.11.652}
}

@article{Grone,
  author  = {Grone, Robert and Merris, Russell},
  title   = {The {Laplacian} Spectrum of a Graph {II}},
  journal = {SIAM Journal on Discrete Mathematics},
  volume  = {7},
  number  = {2},
  pages   = {221-229},
  year    = {1994},
  doi     = {10.1137/S0895480191222653}
}

@article{Bai,
  author  = {Bai, Hua},
  title   = {The {Grone}-{Merris} conjecture},
  journal = {Transactions of the American Mathematical Society},
  issn    = {0002-9947},
  volume  = {363},
  number  = {8},
  pages   = {4463--4474},
  year    = {2011},
  doi     = {10.1090/S0002-9947-2011-05393-6}
}

@book{Brouwer,
  title     = {Spectra of graphs},
  author    = {Brouwer, Andries E. and Haemers, Willem H.},
  year      = {2012},
  publisher = {Springer},
  address   = {New York},
  doi       = {10.1007/978-1-4614-1939-6}
}

@article{Haemers,
  author  = {Haemers, Willem H. and Mohammadian, Ali and Tayfeh-Rezaie, Behruz},
  title   = {On the sum of {Laplacian} eigenvalues of graphs},
  journal = {Linear Algebra and its Applications},
  issn    = {0024-3795},
  volume  = {432},
  number  = {9},
  pages   = {2214--2221},
  year    = {2010},
  doi     = {10.1016/j.laa.2009.03.038}
}

@article{Wang,
  author  = {Wang, Ke and Lin, Zhen and Zhang, Shumin and Ye, Chengfu},
  title   = {A proof of {Brouwer}'s conjecture for {$k=3$}},
  journal = {Linear Algebra and its Applications},
  issn    = {0024-3795},
  volume  = {736},
  pages   = {189--213},
  year    = {2026},
  doi     = {10.1016/j.laa.2026.01.026}
}

@article{Ganie,
  author  = {Ganie, Hilal Ahmad and Pirzada, Shariefuddin and Rather, Bilal Ahmad and Trevisan, Vilmar},
  title   = {Further developments on {Brouwer}'s conjecture for the sum of {Laplacian} eigenvalues of graphs},
  journal = {Linear Algebra and its Applications},
  issn    = {0024-3795},
  volume  = {588},
  pages   = {1--18},
  year    = {2020},
  doi     = {10.1016/j.laa.2019.11.020}
}

@article{Du,
  title   = {Upper bounds for the sum of {Laplacian} eigenvalues of graphs},
  journal = {Linear Algebra and its Applications},
  volume  = {436},
  number  = {9},
  pages   = {3672-3683},
  year    = {2012},
  issn    = {0024-3795},
  doi     = {10.1016/j.laa.2012.01.007},
  author  = {Zhibin Du and Bo Zhou}
}

@article{Chen2018,
  author  = {Chen, Xiaodan},
  title   = {Improved results on {Brouwer}'s conjecture for sum of the {Laplacian} eigenvalues of a graph},
  journal = {Linear Algebra and its Applications},
  issn    = {0024-3795},
  volume  = {557},
  pages   = {327--338},
  year    = {2018},
  doi     = {10.1016/j.laa.2018.08.003}
}

@article{Chen2019,
  author  = {Chen, Xiaodan},
  title   = {On {Brouwer}'s conjecture for the sum of {$k$} largest {Laplacian} eigenvalues of graphs},
  journal = {Linear Algebra and its Applications},
  issn    = {0024-3795},
  volume  = {578},
  pages   = {402--410},
  year    = {2019},
  doi     = {10.1016/j.laa.2019.05.029}
}

@article{Aouchiche2013,
  author  = {Aouchiche, Mustapha and Hansen, Pierre},
  title   = {Two {Laplacians} for the distance matrix of a graph},
  journal = {Linear Algebra and its Applications},
  issn    = {0024-3795},
  volume  = {439},
  number  = {1},
  pages   = {21--33},
  year    = {2013},
  doi     = {10.1016/j.laa.2013.02.030}
}

@article{Aouchiche2014,
  author  = {Aouchiche, Mustapha and Hansen, Pierre},
  title   = {Some properties of the distance {Laplacian} eigenvalues of a graph},
  journal = {Czechoslovak Mathematical Journal},
  year    = {2014},
  volume  = {64},
  number  = {3},
  pages   = {751--761},
  doi     = {10.1007/s10587-014-0129-2}
}

@article{Nath,
  title   = {On the distance {Laplacian} spectra of graphs},
  journal = {Linear Algebra and its Applications},
  volume  = {460},
  pages   = {97-110},
  year    = {2014},
  issn    = {0024-3795},
  doi     = {10.1016/j.laa.2014.07.025},
  author  = {Milan Nath and Somnath Paul}
}

@article{Lin,
  title   = {On the distance and distance {Laplacian} eigenvalues of graphs},
  journal = {Linear Algebra and its Applications},
  volume  = {492},
  pages   = {128-135},
  year    = {2016},
  issn    = {0024-3795},
  doi     = {10.1016/j.laa.2015.11.014},
  author  = {Huiqiu Lin and Baoyindureng Wu and Yingying Chen and Jinlong Shu}
}

@article{Fernandes,
  title   = {Multiplicities of distance {Laplacian} eigenvalues and forbidden subgraphs},
  journal = {Linear Algebra and its Applications},
  volume  = {541},
  pages   = {81-93},
  year    = {2018},
  issn    = {0024-3795},
  doi     = {10.1016/j.laa.2017.11.031},
  author  = {Fernandes, Ros{\'a}rio and de Freitas, Maria Aguieiras A. and da Silva, Celso M. Jr. and del-Vecchio, Renata Raposo}
}

@article{Brimkov,
  author  = {Brimkov, Boris and Duna, Ken and Hogben, Leslie and Lorenzen, Kate and Reinhart, Carolyn and Song, Sung-Yell and Yarrow, Mark},
  title   = {Graphs that are cospectral for the distance {Laplacian}},
  journal = {The Electronic Journal of Linear Algebra},
  issn    = {1081-3810},
  volume  = {36},
  pages   = {334--351},
  year    = {2020},
  doi     = {10.13001/ela.2020.4941}
}

@article{Rather,
  title   = {Distance {Laplacian} spectra of graphs: A survey},
  journal = {Discrete Applied Mathematics},
  volume  = {361},
  pages   = {136-195},
  year    = {2025},
  issn    = {0166-218X},
  doi     = {10.1016/j.dam.2024.10.001},
  author  = {Bilal Ahmad Rather and Mustapha Aouchiche}
}

@article{Pirzada,
  author  = {Pirzada, Shariefuddin and Khan, Saleem},
  title   = {On the sum of distance {Laplacian} eigenvalues of graphs},
  journal = {Tamkang Journal of Mathematics},
  issn    = {0049-2930},
  volume  = {54},
  number  = {1},
  pages   = {83--91},
  year    = {2023},
  doi     = {10.5556/j.tkjm.54.2023.4120}
}

@article{Zhou,
  author  = {Zhou, Yuwei and Wang, Ligong and Chai, Yirui},
  title   = {{Brouwer} type conjecture for the eigenvalues of distance {Laplacian} matrix of a graph},
  journal = {Computational and Applied Mathematics},
  issn    = {2238-3603},
  volume  = {44},
  number  = {3},
  pages   = {14},
  year    = {2025},
  doi     = {10.1007/s40314-025-03095-0}
}

@article{Mushtaq,
  author  = {Mushtaq, Ummer and Pirzada, Shariefuddin and Khan, Saleem},
  title   = {On the sum of the eigenvalues of the distance {Laplacian} matrix of graphs with diameter three and four},
  journal = {Revista de la Uni{\'o}n Matem{\'a}tica Argentina},
  issn    = {0041-6932},
  volume  = {69},
  number  = {1},
  pages   = {193--202},
  year    = {2026},
  doi     = {10.33044/revuma.5147}
}

@article{Williams,
  author  = {Williams, Gerald},
  title   = {{Smith} forms for adjacency matrices of circulant graphs},
  journal = {Linear Algebra and its Applications},
  issn    = {0024-3795},
  volume  = {443},
  pages   = {21--33},
  year    = {2014},
  doi     = {10.1016/j.laa.2013.11.006}
}

\end{document}